\documentclass[11pt]{article}

\usepackage[margin=1in]{geometry}
\usepackage[T1]{fontenc}
\usepackage[utf8]{inputenc}
\usepackage{lmodern}
\usepackage{microtype}
\usepackage{amsmath,amssymb,amsthm,mathtools}
\usepackage{bm}
\usepackage{enumitem}
\usepackage{booktabs}
\usepackage{graphicx}
\usepackage{hyperref}
\usepackage[nameinlink,capitalize,noabbrev]{cleveref}

\hypersetup{hidelinks}
\numberwithin{equation}{section}
\allowdisplaybreaks

\newtheorem{theorem}{Theorem}[section]
\newtheorem{lemma}[theorem]{Lemma}
\newtheorem{proposition}[theorem]{Proposition}
\newtheorem{corollary}[theorem]{Corollary}

\theoremstyle{remark}
\newtheorem{remark}[theorem]{Remark}

\crefname{theorem}{Theorem}{Theorems}
\Crefname{theorem}{Theorem}{Theorems}
\crefname{lemma}{Lemma}{Lemmas}
\Crefname{lemma}{Lemma}{Lemmas}
\crefname{proposition}{Proposition}{Propositions}
\Crefname{proposition}{Proposition}{Propositions}
\crefname{corollary}{Corollary}{Corollaries}
\Crefname{corollary}{Corollary}{Corollaries}
\crefname{remark}{Remark}{Remarks}
\Crefname{remark}{Remark}{Remarks}
\crefname{assumption}{Assumption}{Assumptions}
\Crefname{assumption}{Assumption}{Assumptions}

\newcommand{\R}{\mathbb{R}}

\newcommand{\ii}{\mathrm{i}}
\newcommand{\T}{\mathcal{T}}
\newcommand{\E}{\mathcal{E}}
\newcommand{\Pp}{\mathbb{P}}
\newcommand{\CR}{\mathrm{CR}}
\newcommand{\Gh}{G_h}
\newcommand{\Lh}{\mathcal L_h}
\newcommand{\bn}{\bm n}
\newcommand{\bq}{\bm q}
\newcommand{\br}{\bm r}
\newcommand{\btau}{\bm \tau}

\newcommand{\bv}{\bm v}

\newcommand{\epsq}{\varepsilon_h^q}
\newcommand{\epsu}{\varepsilon_h^u}
\newcommand{\epsuh}{\varepsilon_h^{\widehat u}}

\title{Mesh-Uniform Stability and Error Estimates for HDG Discretizations of the Helmholtz Equation}
\author{Yukun Yue\thanks{Department of Mathematics, University of Wisconsin--Madison, Madison, WI 53706, USA. Email: \texttt{yyue24@wisc.edu}.}}
\date{}

\begin{document}
\maketitle

\begin{abstract}
We revisit the hybridizable discontinuous Galerkin method of Cui and Zhang for the Helmholtz equation with first-order absorbing boundary condition. Their analysis proves stability without imposing a mesh constraint coupling $h$ and $\kappa$, but the explicit stability bound still contains negative powers of the mesh size. We prove that this mesh-dependent blow-up is not intrinsic to the HDG discretization. Under the same geometric and mesh framework as in the reference analysis, for fixed polynomial degree and for stabilization parameters uniformly bounded above and below, we establish a generalized stability estimate whose constant is independent of the mesh size. For every prescribed compact interval $0<\kappa_0\le\kappa\le\kappa_1<\infty$, the stability constant can be chosen locally uniformly with respect to $\kappa$. The proof replaces the Rellich-identity and inverse-estimate argument by a compactness argument based on the HDG discrete distributional gradient and the Crouzeix--Raviart lifting mechanism. As consequences, we obtain well-posedness, convergence for minimal $L^2$ data, and projection-based error estimates in which the negative powers of the mesh size appearing in the previous theory are removed. Numerical experiments confirm the predicted mesh-uniform behavior and show a clear contrast with the mesh-dependent growth suggested by the previous explicit bound.
\end{abstract}

\section{Introduction}

The Helmholtz equation is a prototypical noncoercive time-harmonic wave problem: the associated variational form contains the negative mass term $-\kappa^2(u,v)$ and is not coercive on $H^1(\Omega)$ in the standard elliptic variational sense; see, for example, \cite[Chapter~6]{Evans2010}. Its finite element approximation is delicate because the stability of the continuous resolvent, the pollution effect, and the preasymptotic behavior of the discrete scheme all depend on the wave number; see, for example, \cite{BabuskaSauter1997,FengWu2009,IhlenburgBabuska1995}. The regularity of the solution, including the high-frequency behavior of corner singularities on nonsmooth domains, further affects the attainable convergence rates \cite{chaumont2018high}. The Helmholtz equation has also motivated a broad computational and inverse-problem literature, including absorbing-boundary and iterative solvers \cite{bayliss1983iterative}, fast multipole acceleration \cite{gumerov2005fast}, and high-frequency inverse scattering asymptotics \cite{chen2023high}.

Hybridizable discontinuous Galerkin (HDG) methods are attractive in this setting because they retain a local discontinuous structure while reducing the global unknowns to skeletal traces. General HDG frameworks, projection tools, and a posteriori error estimation are developed in \cite{ainsworth2018fully,Cockburn2009,CockburnGopalakrishnanSayas2010,Du2019}. Beyond wave problems, HDG methods have been analyzed for convection-dominated diffusion \cite{fu2015convection}, for Stokes and continuum mechanics problems including linear elasticity \cite{fu2015elasticity,Nguyen2012,Nguyen2010}, for incompressible Navier--Stokes and pressure-robust formulations \cite{Cesmelioglu2016,Kirk2019}, for Cahn--Hilliard equations \cite{Chen2023}, and for miscible displacement under minimal regularity \cite{Kirk2026}. For Helmholtz problems, a range of hybrid and hybridizable formulations has been studied, including a multiscale hybrid-mixed method for heterogeneous media \cite{chaumont2020multiscale} and an HDG method with characteristic variables \cite{modave2023hybridizable}. In particular, \cite{GriesmaierMonk2011} analyzed an HDG method for the interior Dirichlet problem, and Cui and Zhang \cite{CuiZhang2014} analyzed an HDG method for the Helmholtz equation with first-order absorbing boundary condition in two and three dimensions. The condition $h\kappa^2$ sufficiently small is mentioned here because it is the mesh restriction under which the earlier Dirichlet HDG analysis of \cite{GriesmaierMonk2011} obtains its stability and optimal-convergence result; it is stronger than the basic resolution condition $h\kappa\lesssim1$. Cui and Zhang's absorbing-boundary HDG result is therefore important: the method is stable and well posed for every $h>0$ and $\kappa>0$, without imposing such a mesh restriction, and projection-based $L^2$ error estimates are obtained for arbitrary polynomial degree.

The explicit stability factor in \cite{CuiZhang2014}, however, contains negative powers of the mesh size. More precisely, \cite[Theorem~3.1, equation~(3.3)]{CuiZhang2014} gives the generalized stability factor
\begin{equation}\label{eq:intro-CZ-constant}
C_{\rm CZ}(h,\kappa,\tau)
=\{\kappa^2\tau_{\min}^{-1}+\kappa+\tau_{\min}^{-1}h^{-3}+\tau_{\max}h^{-1}\}^2+1.
\end{equation}
Here $h$ is the maximal element diameter, $\tau$ is the facewise positive HDG stabilization parameter, and $\tau_{\min}$ and $\tau_{\max}$ denote its minimum and maximum over all mesh faces. Thus the upper bound deteriorates as $h\to0$, even though the method is stable for every mesh size. This is not merely a cosmetic issue, because the same stability factor propagates into the projection error estimate and hence into the final $L^2$ error bounds. For the common choice $\tau=\kappa$ \cite{GriesmaierMonk2011}, the mesh-singular part in the braces in \eqref{eq:intro-CZ-constant} contains $\kappa^{-1}h^{-3}+\kappa h^{-1}$; for each fixed wave number this is still dominated by an $h^{-3}$ contribution as $h\to0$, and the squared stability factor contains the corresponding higher-order mesh singularity.

The source of the mesh-dependent powers is the proof mechanism. The argument in \cite{CuiZhang2014} uses elementwise Rellich identities with multiplier-type discrete test functions, followed by inverse estimates for residuals and jumps. In particular, the tangential jump estimates generate negative powers of $h$ which are then inherited by the final constant. The goal of this paper is to show that these negative powers are a proof artifact rather than an intrinsic stability feature of the HDG Helmholtz operator.

Our proof is based on an HDG compactness mechanism. The central object is an HDG discrete distributional gradient, denoted by $\Gh(u_h,\widehat u_h)$ and defined in Section~\ref{sec:setting}. Here $u_h$ is the element unknown and $\widehat u_h$ is the single-valued skeletal trace unknown of the HDG method. The operator $\Gh$ combines the broken gradient of $u_h$ with the mismatch between $u_h$ and $\widehat u_h$. The compactness argument uses a Crouzeix--Raviart lifting of face averages of the skeletal variable; this mechanism is closely related to the discrete Poincar\'e and trace inequalities developed in \cite{Yue2026BIT} and to the minimal-regularity HDG compactness framework of \cite{JiangWalkingtonYue2025}.

The main contributions are the following.
\begin{enumerate}[label=(\roman*),leftmargin=2.25em]
\item We prove an $h$-uniform generalized stability estimate for the auxiliary HDG system with an additional right-hand side $Q$ in the first, flux-defining equation. The proof first establishes a small-mesh estimate by compactness and then combines it with the coarse-mesh consequence of \cite[Theorem~3.1]{CuiZhang2014}. The estimate controls $\bq_h$, $u_h$, the boundary trace $\widehat u_h$, and the stabilization jump $u_h-\widehat u_h$.
\item The estimate is locally uniform in the wave number: for every compact interval $0<\kappa_0\le\kappa\le\kappa_1<\infty$, the stability constant can be chosen uniformly for $\kappa$ in that interval, provided the stabilization is uniformly bounded above and below on the same interval.
\item Applying this generalized stability estimate to the HDG projection error equations yields projection-based error estimates in which the negative powers of $h$ in the previous stability factor are removed.
\item We include numerical experiments that verify the expected convergence rates, test the generalized $Q$-forcing stability estimate, and compare the measured stability ratio with the mesh-singular reference curve from \cite{CuiZhang2014}.
\end{enumerate}
Throughout the paper the polynomial degree $p$ is fixed. Constants may depend on $p$, the domain, the shape-regularity class, the prescribed wave-number interval, and uniform upper and lower bounds for the stabilization parameter, but not on $h$.

\section{Problem setting and HDG formulation}\label{sec:setting}

\subsection{Continuous problem and assumptions}
Let $\Omega\subset\R^d$, $d=2,3$, be a bounded strictly star-shaped polygonal or polyhedral domain with respect to a point $x_\Omega\in\Omega$. Thus there exists $c_\Omega>0$ such that
\begin{equation}\label{eq:star-shaped}
(x-x_\Omega)\cdot \bn_\Omega \ge c_\Omega
\qquad\text{for }x\in\partial\Omega,
\end{equation}
where $\bn_\Omega$ denotes the exterior unit normal. This is the same geometric assumption used in \cite{CuiZhang2014}.

For a wave number $\kappa>0$, we consider
\begin{subequations}\label{eq:helmholtz-primal}
\begin{align}
-\Delta u-\kappa^2 u&=f &&\text{in }\Omega,\label{eq:helmholtz-primal-a}\\
\partial_{\bn}u+\ii\kappa u&=g &&\text{on }\partial\Omega.\label{eq:helmholtz-primal-b}
\end{align}
\end{subequations}
Equivalently, setting $\bq=-\nabla u$ gives
\begin{subequations}\label{eq:helmholtz-mixed}
\begin{align}
\bq+\nabla u&=0 &&\text{in }\Omega,\\
\nabla\cdot\bq-\kappa^2u&=f &&\text{in }\Omega,\\
-\bq\cdot\bn+\ii\kappa u&=g &&\text{on }\partial\Omega.
\end{align}
\end{subequations}

For measurable sets $D\subset\Omega$ and $\Sigma\subset\partial\Omega$, we use complex $L^2$ pairings linear in the first argument and conjugate-linear in the second:
\[
(u,v)_D=\int_D u\overline v\,dx,
\qquad
\langle u,v\rangle_\Sigma=\int_\Sigma u\overline v\,ds.
\]
All real-valued finite element estimates used below are applied componentwise to real and imaginary parts.

\subsection{Meshes, spaces, and stabilization}
Let $\{\T_h\}_{h>0}$ be a shape-regular family of conforming simplicial triangulations of $\Omega$. Thus any two distinct elements intersect only in a common subsimplex, and the boundary mesh conforms to $\partial\Omega$. For each element $K\in\T_h$, let $h_K=\operatorname{diam}(K)$ and let $\rho_K$ be its inradius. We use a uniform shape-regularity assumption for the whole mesh family, namely
\[
\sup_h\sup_{K\in\T_h}\frac{h_K}{\rho_K}<\infty,
\qquad
h=\max_{K\in\T_h}h_K.
\]
The set of all faces is denoted by $\E_h$, with interior and boundary subsets $\E_h^i$ and $\E_h^b$. A generic face is denoted by $F$. For elementwise and skeleton norms we write
\[
\|v\|_{\T_h}^2=\sum_{K\in\T_h}\|v\|_{L^2(K)}^2,
\qquad
\|\mu\|_{\partial\T_h}^2=\sum_{K\in\T_h}\|\mu\|_{L^2(\partial K)}^2.
\]
On an interior face the skeleton norm counts the two traces from the two adjacent elements. On a boundary face it counts the single trace from the unique adjacent element.

Fix $p\ge0$. The HDG spaces are
\[
V_h=\{\bv\in L^2(\Omega)^d:\ \bv|_K\in \Pp_p(K)^d\ \forall K\in\T_h\},
\]
\[
W_h=\{w\in L^2(\Omega):\ w|_K\in \Pp_p(K)\ \forall K\in\T_h\},
\qquad
M_h=\{\mu\in L^2(\E_h):\ \mu|_F\in \Pp_p(F)\ \forall F\in\E_h\}.
\]
Let $P_W$, $P_M$, and $P_V$ denote the $L^2$ projections onto $W_h$, $M_h$, and $V_h$, respectively.

The stabilization $\tau$ is positive and constant on each face. For fixed-wave-number statements, we assume
\begin{equation}\label{eq:tau-bounds-fixed}
0<\tau_{\min}\le \tau|_F\le \tau_{\max}<\infty,
\end{equation}
with bounds independent of $h$. For locally uniform wave-number statements, we assume that for every compact interval $[\kappa_0,\kappa_1]\subset(0,\infty)$ there exist $0<\tau_0\le \tau_1<\infty$ such that
\begin{equation}\label{eq:tau-bounds-uniform}
\tau_0\le \tau|_F\le \tau_1
\qquad\text{for all faces }F\text{ and all }\kappa\in[\kappa_0,\kappa_1].
\end{equation}
For example, the choice $\tau=c_\tau\kappa$ with fixed $c_\tau>0$ satisfies \eqref{eq:tau-bounds-uniform} on every compact wave-number interval.

\subsection{The auxiliary generalized HDG system}
We first state a slightly enlarged HDG system, following \cite{CuiZhang2014}. The symbol $Q$ is not an additional physical forcing and does not modify the actual HDG method used to solve the Helmholtz equation. It is an auxiliary vector-valued right-hand side in the first, flux-defining equation \eqref{eq:hdg-general-a} below. In the actual Helmholtz discretization one sets $Q=0$. We keep $Q$ because the same algebraic system is satisfied by the projected error in Section~\ref{sec:error}; there, $Q=\pi\bq-\bq$ is the residual caused by replacing the exact flux $\bq$ by its local HDG projection $\pi\bq$ in the first mixed equation.

The auxiliary generalized system seeks $(\bq_h,u_h,\widehat u_h)\in V_h\times W_h\times M_h$ such that
\begin{subequations}\label{eq:hdg-general}
\begin{align}
(\bq_h,\btau_h)_{\T_h}-(u_h,\nabla\cdot\btau_h)_{\T_h}
+\langle \widehat u_h,\btau_h\cdot\bn\rangle_{\partial\T_h}
&=(Q,\btau_h)_{\T_h},\label{eq:hdg-general-a}\\
-(\bq_h,\nabla v_h)_{\T_h}-\kappa^2(u_h,v_h)_{\T_h}
+\langle\widehat{\bq}_h\cdot\bn,v_h\rangle_{\partial\T_h}
&=(f,v_h)_{\T_h},\label{eq:hdg-general-b}\\
\langle-\widehat{\bq}_h\cdot\bn+\ii\kappa\widehat u_h,\mu_h\rangle_{\partial\Omega}
&=\langle g,\mu_h\rangle_{\partial\Omega},\label{eq:hdg-general-c}\\
\langle\widehat{\bq}_h\cdot\bn,\mu_h\rangle_{\partial\T_h\setminus\partial\Omega}
&=0,\label{eq:hdg-general-d}
\end{align}
\end{subequations}
for all $(\btau_h,v_h,\mu_h)\in V_h\times W_h\times M_h$, where
\begin{equation}\label{eq:numerical-flux}
\widehat{\bq}_h=\bq_h+\ii\tau(u_h-\widehat u_h)\bn
\qquad\text{on }\partial\T_h.
\end{equation}

\subsection{The HDG discrete distributional gradient and pair formulation}
For $(v_h,\widehat v_h)\in W_h\times M_h$, define the HDG discrete distributional gradient $\Gh(v_h,\widehat v_h)\in V_h$ by
\begin{equation}\label{eq:Gh-definition}
(\Gh(v_h,\widehat v_h),\br_h)_{\T_h}
=(\nabla_h v_h,\br_h)_{\T_h}
+\langle\widehat v_h-v_h,\br_h\cdot\bn\rangle_{\partial\T_h}
\qquad\forall \br_h\in V_h.
\end{equation}
This definition is well posed because $(\cdot,\cdot)_{\T_h}$ is an inner product on the finite-dimensional space $V_h$; hence the Riesz representation theorem gives a unique element $\Gh(v_h,\widehat v_h)\in V_h$.

Elementwise integration by parts gives
\[
(\nabla_h v_h,\br_h)_{\T_h}
=-(v_h,\nabla\cdot\br_h)_{\T_h}
+\langle v_h,\br_h\cdot\bn\rangle_{\partial\T_h},
\]
and therefore
\begin{equation}\label{eq:Gh-equivalent}
(\Gh(v_h,\widehat v_h),\br_h)_{\T_h}
=-(v_h,\nabla\cdot\br_h)_{\T_h}
+\langle\widehat v_h,\br_h\cdot\bn\rangle_{\partial\T_h}
\qquad\forall \br_h\in V_h.
\end{equation}
Thus \eqref{eq:hdg-general-a} is equivalent to
\begin{equation}\label{eq:q-plus-Gh}
\bq_h+\Gh(u_h,\widehat u_h)=P_VQ.
\end{equation}

We will also use the consistency of this definition with the usual distributional gradient. If $v\in H^1(\Omega)$, then, for every $\br_h\in V_h$,
\begin{align*}
(\Gh(P_Wv,P_Mv),\br_h)_{\T_h}
&=-(P_Wv,\nabla\cdot\br_h)_{\T_h}
+\langle P_Mv,\br_h\cdot\bn\rangle_{\partial\T_h}\\
&=-(v,\nabla\cdot\br_h)_{\T_h}
+\langle v,\br_h\cdot\bn\rangle_{\partial\T_h}\\
&=(\nabla v,\br_h)_\Omega.
\end{align*}
Consequently,
\begin{equation}\label{eq:Gh-projection-gradient}
\Gh(P_Wv,P_Mv)=P_V\nabla v.
\end{equation}

The remaining equations can be written in a form using test pairs.
\begin{lemma}\label{lem:pair-form}
A triple $(\bq_h,u_h,\widehat u_h)$ satisfying \eqref{eq:hdg-general-b}--\eqref{eq:hdg-general-d} also satisfies, for all $(v_h,\widehat v_h)\in W_h\times M_h$,
\begin{align}
&-(\bq_h,\Gh(v_h,\widehat v_h))_{\T_h}
-\kappa^2(u_h,v_h)_{\T_h}
+\ii\langle\tau(u_h-\widehat u_h),v_h-\widehat v_h\rangle_{\partial\T_h}
+\ii\kappa\langle\widehat u_h,\widehat v_h\rangle_{\partial\Omega}
\notag\\
&\hspace{7cm}= (f,v_h)_{\T_h}+\langle g,\widehat v_h\rangle_{\partial\Omega}.
\label{eq:pair-form}
\end{align}
Conversely, \eqref{eq:pair-form} implies \eqref{eq:hdg-general-b}--\eqref{eq:hdg-general-d}.
\end{lemma}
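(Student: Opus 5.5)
The plan is to take $\br_h=\bq_h$ in the definition \eqref{eq:Gh-definition} of $\Gh$ and substitute. Conjugating \eqref{eq:Gh-definition} with $\br_h=\bq_h$ gives
\[
(\bq_h,\Gh(v_h,\widehat v_h))_{\T_h}
=(\bq_h,\nabla_h v_h)_{\T_h}+\langle \bq_h\cdot\bn,\widehat v_h-v_h\rangle_{\partial\T_h},
\]
so that
\[
-(\bq_h,\Gh(v_h,\widehat v_h))_{\T_h}
=-(\bq_h,\nabla_h v_h)_{\T_h}+\langle \bq_h\cdot\bn,v_h\rangle_{\partial\T_h}-\langle \bq_h\cdot\bn,\widehat v_h\rangle_{\partial\T_h}.
\]
Next I expand the flux \eqref{eq:numerical-flux} in the boundary term of \eqref{eq:hdg-general-b}:
\[
\langle\widehat{\bq}_h\cdot\bn,v_h\rangle_{\partial\T_h}
=\langle \bq_h\cdot\bn,v_h\rangle_{\partial\T_h}
+\ii\langle\tau(u_h-\widehat u_h),v_h\rangle_{\partial\T_h}.
\]
Hence \eqref{eq:hdg-general-b} takes the form
\[
-(\bq_h,\Gh(v_h,\widehat v_h))_{\T_h}-\kappa^2(u_h,v_h)_{\T_h}
+\langle\bq_h\cdot\bn,\widehat v_h\rangle_{\partial\T_h}
+\ii\langle\tau(u_h-\widehat u_h),v_h\rangle_{\partial\T_h}=(f,v_h)_{\T_h}.
\]

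The skeleton equations \eqref{eq:hdg-general-c}--\eqref{eq:hdg-general-d} should absorb the term $\langle\bq_h\cdot\bn,\widehat v_h\rangle_{\partial\T_h}$. I take $\mu_h=\widehat v_h$ in both and add them. Because $\partial\T_h$ splits into the interior part and $\partial\Omega$, the sum reads
\[
\langle\widehat{\bq}_h\cdot\bn,\widehat v_h\rangle_{\partial\T_h}
=\ii\kappa\langle\widehat u_h,\widehat v_h\rangle_{\partial\Omega}-\langle g,\widehat v_h\rangle_{\partial\Omega},
\]
where the left side equals
\[
\langle\bq_h\cdot\bn,\widehat v_h\rangle_{\partial\T_h}+\ii\langle\tau(u_h-\widehat u_h),\widehat v_h\rangle_{\partial\T_h}.
\]
Solving this for $\langle\bq_h\cdot\bn,\widehat v_h\rangle_{\partial\T_h}$ and inserting it into the rewritten \eqref{eq:hdg-general-b}, the two stabilization terms combine into $\ii\langle\tau(u_h-\widehat u_h),v_h-\widehat v_h\rangle_{\partial\T_h}$. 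The $\partial\Omega$ terms come out as $+\ii\kappa\langle\widehat u_h,\widehat v_h\rangle_{\partial\Omega}$ on the left and $\langle g,\widehat v_h\rangle_{\partial\Omega}$ on the right, which is exactly \eqref{eq:pair-form}.

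For the converse I reverse the computation by choosing special test pairs. With $\widehat v_h=0$, the identity above shows that \eqref{eq:pair-form} reduces to \eqref{eq:hdg-general-b}. Subtracting this from \eqref{eq:pair-form} for a general pair leaves
\[
\langle\widehat{\bq}_h\cdot\bn,\widehat v_h\rangle_{\partial\T_h}
-\ii\kappa\langle\widehat u_h,\widehat v_h\rangle_{\partial\Omega}
=-\langle g,\widehat v_h\rangle_{\partial\Omega}
\qquad\forall\,\widehat v_h\in M_h.
\]
Choosing $\widehat v_h$ supported on a single interior face gives \eqref{eq:hdg-general-d}. Choosing it supported on boundary faces gives \eqref{eq:hdg-general-c}, since there the skeleton term is just $\langle\widehat{\bq}_h\cdot\bn,\widehat v_h\rangle_{\partial\Omega}$.

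There is no genuine obstacle, only bookkeeping. The two points to watch are the conjugation convention, which requires $\br_h=\bq_h$ in the second slot, and the fact that on interior faces $\langle\cdot,\widehat v_h\rangle_{\partial\T_h}$ sums both element contributions against the same single-valued $\widehat v_h$, which is precisely how \eqref{eq:hdg-general-d} is written.
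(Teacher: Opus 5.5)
Your proof is correct and follows essentially the same route as the paper. Like the paper, you expand the numerical flux \eqref{eq:numerical-flux}, use the conjugated definition of $\Gh$ with $\br_h=\bq_h$, and test \eqref{eq:hdg-general-c}--\eqref{eq:hdg-general-d} with $\mu_h=\widehat v_h$; for the converse you take $\widehat v_h=0$, then $v_h=0$ with $\widehat v_h$ localized to interior or boundary faces. The only difference is bookkeeping order (the paper first splits $v_h=(v_h-\widehat v_h)+\widehat v_h$ in the flux term), which changes nothing.
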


\begin{proof}
Starting from \eqref{eq:hdg-general-b}, write
\[
\langle\widehat{\bq}_h\cdot\bn,v_h\rangle_{\partial\T_h}
=
\langle\widehat{\bq}_h\cdot\bn,v_h-\widehat v_h\rangle_{\partial\T_h}
+
\langle\widehat{\bq}_h\cdot\bn,\widehat v_h\rangle_{\partial\T_h}.
\]
The interior part of the second term vanishes by \eqref{eq:hdg-general-d}, while \eqref{eq:hdg-general-c} gives
\[
\langle\widehat{\bq}_h\cdot\bn,\widehat v_h\rangle_{\partial\Omega}
=
\ii\kappa\langle\widehat u_h,\widehat v_h\rangle_{\partial\Omega}
-
\langle g,\widehat v_h\rangle_{\partial\Omega}.
\]
Using \eqref{eq:numerical-flux},
\[
\langle\widehat{\bq}_h\cdot\bn,v_h-\widehat v_h\rangle_{\partial\T_h}
=
\langle\bq_h\cdot\bn,v_h-\widehat v_h\rangle_{\partial\T_h}
+
\ii\langle\tau(u_h-\widehat u_h),v_h-\widehat v_h\rangle_{\partial\T_h}.
\]
Finally, by \eqref{eq:Gh-definition},
\[
-(\bq_h,\Gh(v_h,\widehat v_h))_{\T_h}
=-(\bq_h,\nabla_hv_h)_{\T_h}
+
\langle\bq_h\cdot\bn,v_h-\widehat v_h\rangle_{\partial\T_h}.
\]
Combining these identities proves \eqref{eq:pair-form}. The converse follows by choosing $\widehat v_h=0$, then $v_h=0$ on interior and boundary faces separately.
\end{proof}

\section{HDG compactness tools}\label{sec:compactness}

This section records the compactness input used in the stability proof and fixes the notation for the Crouzeix--Raviart lifting. The point is that the compactness is not obtained from the broken gradient alone; it is obtained from the HDG discrete distributional gradient $\Gh$ together with the stabilization jump.

\subsection{Embedding of continuous functions}
For $v\in H^1(\Omega)$, define the HDG embedding
\begin{equation}\label{eq:iota-definition}
\iota_hv=(P_Wv,P_Mv)\in W_h\times M_h.
\end{equation}

\begin{lemma}\label{lem:iota-consistency}
For every $v\in H^1(\Omega)$,
\begin{align}
P_Wv&\to v &&\text{strongly in }L^2(\Omega),\label{eq:PW-conv}\\
P_Mv&\to v|_{\partial\Omega} &&\text{strongly in }L^2(\partial\Omega),\label{eq:PM-conv}\\
\Gh(P_Wv,P_Mv)&\to \nabla v &&\text{strongly in }L^2(\Omega)^d.\label{eq:Gh-iota-conv}
\end{align}
Moreover,
\begin{equation}\label{eq:iota-jump-small}
\|P_Wv-P_Mv\|_{\partial\T_h}\le C h^{1/2}\|\nabla v\|_{L^2(\Omega)}.
\end{equation}
The constant is independent of $h$ for shape-regular mesh families.
\end{lemma}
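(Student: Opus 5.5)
The plan is to prove the four assertions in a slightly different order from the statement. I would start with the local estimate \eqref{eq:iota-jump-small}, deduce \eqref{eq:PM-conv} from it, and then obtain \eqref{eq:PW-conv} and \eqref{eq:Gh-iota-conv} from standard approximation properties of $L^2$ projections combined with a density argument.

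\emph{Jump estimate.} Fix $K$ and a face $F\subset\partial K$. Since $P_M$ is the $L^2(F)$ projection onto $\Pp_p(F)$ and $P_Wv|_F\in\Pp_p(F)$, we have
\[
P_Wv-P_Mv = P_M\big(P_Wv-v\big)\quad\text{on }F,
\]
so $\|P_Wv-P_Mv\|_{L^2(F)}\le\|v-P_Wv\|_{L^2(F)}$. The scaled trace inequality
\[
\|w\|_{L^2(\partial K)}^2\le C\big(h_K^{-1}\|w\|_{L^2(K)}^2+h_K\|\nabla w\|_{L^2(K)}^2\big),
\]
applied to $w=v-P_Wv$, is then combined with the elementwise Poincaré-type bounds $\|v-P_Wv\|_{L^2(K)}\le Ch_K\|\nabla v\|_{L^2(K)}$ (valid since $p\ge0$ and constants lie in the local space) and $\|\nabla(v-P_Wv)\|_{L^2(K)}\le C\|\nabla v\|_{L^2(K)}$. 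The latter follows from $H^1$-stability of the local $L^2$ projection, which one sees by subtracting the mean and using an inverse estimate. This gives $\|v-P_Wv\|_{L^2(\partial K)}\le Ch_K^{1/2}\|\nabla v\|_{L^2(K)}$, and summing over $K$ yields \eqref{eq:iota-jump-small}. All constants depend only on $p$ and shape regularity.

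\emph{Strong convergences.} For \eqref{eq:PW-conv}, the same elementwise bound gives $\|v-P_Wv\|_{\T_h}\le Ch\|\nabla v\|$. For \eqref{eq:PM-conv}, restrict the previous trace estimate to boundary faces: $\|v-P_Mv\|_{L^2(\partial\Omega)}\le\|v-P_Wv\|_{L^2(\partial\Omega)}$ because $P_M$ is the best approximation in $\Pp_p(F)$ and $P_Wv|_F$ is a competitor. Hence this term is $O(h^{1/2})\|\nabla v\|$. For \eqref{eq:Gh-iota-conv}, use the identity \eqref{eq:Gh-projection-gradient}: $\Gh(P_Wv,P_Mv)=P_V\nabla v$. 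It therefore suffices to show $P_V\bm w\to\bm w$ in $L^2$ for every $\bm w\in L^2(\Omega)^d$. Since $\|P_V\|\le1$ and $\|\bm w-P_V\bm w\|\le Ch\|\nabla\bm w\|$ for $\bm w\in H^1(\Omega)^d$, which is dense in $L^2(\Omega)^d$, a standard $\varepsilon/3$ argument gives the convergence.

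\emph{Main obstacle.} The only delicate point is \eqref{eq:Gh-iota-conv}, because $\nabla v$ is merely in $L^2$ and so no rate is available. The density argument using uniform $L^2$-stability of $P_V$ handles this. Everything else is routine local scaling on shape-regular simplices.
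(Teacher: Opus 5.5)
Your proposal is correct and takes essentially the same route as the paper. The gradient statement comes from $\Gh(P_Wv,P_Mv)=P_V\nabla v$ together with $L^2$-convergence of $P_V$, and the jump bound comes from the elementwise trace inequality combined with the approximation properties of the element and face $L^2$ projections. You fill in the details (the face identity $P_Wv-P_Mv=P_M(P_Wv-v)$, the Poincar\'e and inverse-estimate bounds, the density argument for $P_V$) that the paper delegates to ``standard'' results and to \cite[Lemma~4.3]{JiangWalkingtonYue2025}.
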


\begin{proof}
The first two convergence statements are the standard strong convergence of $L^2$ projections on shape-regular meshes. The identity \eqref{eq:Gh-projection-gradient} gives
\[
\Gh(P_Wv,P_Mv)=P_V\nabla v,
\]
and the strong convergence of $P_V\nabla v$ to $\nabla v$ gives \eqref{eq:Gh-iota-conv}. Finally, the trace estimate \eqref{eq:iota-jump-small} follows by applying the elementwise trace inequality and the approximation properties of the element and face $L^2$ projections; this is the HDG embedding estimate of \cite[Lemma~4.3]{JiangWalkingtonYue2025}.
\end{proof}

\subsection{Crouzeix--Raviart lifting and compactness}
For a facewise polynomial $\widehat w_h\in M_h$, define its face average by
\[
\overline{\widehat w_h}|_F=\frac1{|F|}\int_F\widehat w_h\,ds
\qquad \forall F\in\E_h.
\]
Let $\Lh\widehat w_h\in\CR(\T_h)$ be the Crouzeix--Raviart function whose degree of freedom on each face $F$ is this average; equivalently,
\[
\frac1{|F|}\int_F \Lh\widehat w_h\,ds
=
\overline{\widehat w_h}|_F.
\]
Since a linear function has its face average equal to its value at the face centroid, this is the usual CR lifting of the facewise constant function $\overline{\widehat w_h}$. Thus the lifting is never applied directly to the full polynomial $\widehat w_h$, but only to its face average. For further details on the CR lifting, we refer the reader to Appendix \ref{app:compactness}.

\begin{theorem}\label{thm:HDG-compactness}
Let $\{\T_h\}_{h>0}$ be a shape-regular family of conforming simplicial meshes of a bounded Lipschitz polyhedral domain, with $h\to0$. Let $(w_h,\widehat w_h)\in W_h\times M_h$ be a sequence such that
\begin{equation}\label{eq:compactness-bound}
\|w_h\|_{L^2(\Omega)}+
\|\Gh(w_h,\widehat w_h)\|_{L^2(\Omega)}+
\|w_h-\widehat w_h\|_{L^2(\partial\T_h)}
\le C.
\end{equation}
Then there are a subsequence, not relabeled, and a function $w\in H^1(\Omega)$ such that
\begin{align}
w_h&\to w &&\text{strongly in }L^2(\Omega),\label{eq:compact-u}\\
\Gh(w_h,\widehat w_h)&\rightharpoonup \nabla w &&\text{weakly in }L^2(\Omega)^d,\label{eq:compact-G}\\
\widehat w_h|_{\partial\Omega}&\rightharpoonup w|_{\partial\Omega} &&\text{weakly in }L^2(\partial\Omega).\label{eq:compact-trace}
\end{align}
\end{theorem}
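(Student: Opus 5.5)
The plan is to transfer the problem to a conforming-like object through the Crouzeix--Raviart lifting, apply the known discrete compactness for CR functions, and then identify the limits by testing against smooth fields.

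\textbf{Step 1: CR lifting and bounds.} Set $z_h=\Lh\widehat w_h\in\CR(\T_h)$. I will establish two local estimates:
\[
\|w_h-z_h\|_{L^2(\Omega)}\lesssim h\bigl(\|\Gh(w_h,\widehat w_h)\|+h^{-1/2}\|w_h-\widehat w_h\|_{\partial\T_h}\bigr),
\qquad
\|\nabla_h z_h\|_{L^2(\Omega)}\lesssim \|\Gh(w_h,\widehat w_h)\|+h^{-1/2}\|w_h-\widehat w_h\|_{\partial\T_h}.
\]
These are obtained elementwise by scaling to a reference simplex. On $K$, the quantity $\nabla z_h|_K$ is determined by the face averages of $\widehat w_h$, which differ from the face averages of $w_h$ by the jump. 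Testing the definition \eqref{eq:Gh-definition} with constant $\br_h$ also shows
\[
|K|\,\overline{\Gh}|_K=\int_{\partial K}\widehat w_h\bn\,ds=|K|\nabla z_h|_K,
\]
so the elementwise average of $\Gh(w_h,\widehat w_h)$ equals $\nabla z_h|_K$ exactly. This gives $\|\nabla_h z_h\|\le\|\Gh(w_h,\widehat w_h)\|$ directly.

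\textbf{The main obstacle} is that the jump bound in \eqref{eq:compactness-bound} is $O(1)$, not $O(h^{1/2})$. I therefore need a bound on $\|w_h-z_h\|$ that does not rely on a scaled jump. The estimate to aim for is $\|w_h-z_h\|_K\lesssim h_K\|\Gh(w_h,\widehat w_h)\|_K+h_K^{1/2}\|w_h-\widehat w_h\|_{\partial K}$, which is what the appendix on CR lifting presumably provides. It follows from a Poincaré-type norm equivalence on the reference element for $w_h-z_h$ in terms of the local $\Gh$ and the face average mismatch. The gradient of $w_h-z_h$ is controlled by $\Gh$ plus an $h^{-1/2}$-weighted jump, but $h^{1/2}$ times the mean values is harmless. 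I will accept this estimate as the key input, yielding $\|w_h-z_h\|\lesssim h^{1/2}C\to0$.

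\textbf{Step 2: Compactness.} Apply the classical discrete Rellich theorem for Crouzeix--Raviart functions with bounded broken $H^1$ norm. Since $\|z_h\|\le\|w_h\|+o(1)$ is bounded, there is a subsequence with $z_h\to w$ strongly in $L^2$ and $w\in H^1(\Omega)$. This uses the continuity of face averages across interior faces, which makes jump terms against smooth test functions vanish. Step 1 then gives \eqref{eq:compact-u}. By boundedness, along a further subsequence $\Gh(w_h,\widehat w_h)\rightharpoonup \bm G$ weakly and $\widehat w_h|_{\partial\Omega}\rightharpoonup \gamma$ weakly in $L^2(\partial\Omega)$. Boundedness of the latter follows from a discrete trace inequality: $\|\widehat w_h\|_{\partial\Omega}\le\|w_h\|_{\partial\Omega}+C$, and $\|w_h\|_{\partial\Omega}$ is bounded by the trace inequality for the pair via $\Gh$ and the jump, as in \cite{Yue2026BIT}.

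\textbf{Step 3: Identify the limits.} For $\bm\phi\in C^\infty(\overline\Omega)^d$, use \eqref{eq:Gh-equivalent} with $\br_h=P_V\bm\phi$. Replacing $P_V\bm\phi$ by $\bm\phi$ costs terms of size $\|w_h\|\,\|\nabla\cdot(\bm\phi-P_V\bm\phi)\|_{\T_h}$ and $\langle \widehat w_h-w_h,(\bm\phi-P_V\bm\phi)\cdot\bn\rangle$, bounded by $C h^{1/2}$. The remaining identity is
\[
(\Gh,\bm\phi)\approx-(w_h,\nabla\cdot\bm\phi)+\langle\widehat w_h,\bm\phi\cdot\bn\rangle_{\partial\T_h}.
\]
Interior faces cancel because $\widehat w_h$ is single valued, leaving $\langle\widehat w_h,\bm\phi\cdot\bn\rangle_{\partial\Omega}$. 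Passing to the limit gives
\[
(\bm G,\bm\phi)=-(w,\nabla\cdot\bm\phi)+\langle\gamma,\bm\phi\cdot\bn\rangle_{\partial\Omega}.
\]
Taking $\bm\phi$ compactly supported yields $\bm G=\nabla w$. Green's formula then gives $\langle\gamma-w,\bm\phi\cdot\bn\rangle_{\partial\Omega}=0$ for all smooth $\bm\phi$, and since the normal traces $\bm\phi\cdot\bn$ are dense in $L^2(\partial\Omega)$ on a Lipschitz polyhedral boundary, $\gamma=w|_{\partial\Omega}$. This proves \eqref{eq:compact-G} and \eqref{eq:compact-trace}.
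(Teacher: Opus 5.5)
Your proposal is correct and follows the paper's own route: the CR lifting of face averages, the exact identity that $\nabla_h\Lh\widehat w_h$ is the elementwise mean of $\Gh(w_h,\widehat w_h)$, the local bound $h_K\|\Gh(w_h,\widehat w_h)\|_{L^2(K)}+h_K^{1/2}\|w_h-\widehat w_h\|_{L^2(\partial K)}$ for $w_h-\Lh\widehat w_h$, CR compactness, and identification of the limits by testing with $P_V\bm\phi$. Your trace bound for $\widehat w_h|_{\partial\Omega}$ and the density of the normal traces $\bm\phi\cdot\bn$ only fill in details the paper leaves terse.

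There is one slip in Step 3 that needs fixing. The term $\|w_h\|\,\|\nabla\cdot(\bm\phi-P_V\bm\phi)\|_{\T_h}$ is not $O(h^{1/2})$ when $p=0$, since then $\nabla\cdot P_V\bm\phi=0$. Your two-term split also omits $\langle w_h,(P_V\bm\phi-\bm\phi)\cdot\bn\rangle_{\partial\T_h}$. Both issues disappear if you combine the volume term with that omitted boundary term by elementwise integration by parts: they equal $(\nabla_h w_h,P_V\bm\phi-\bm\phi)_{\T_h}$, which vanishes because $\nabla_h w_h\in V_h$. The discrepancy is then exactly $\langle\widehat w_h-w_h,(P_V\bm\phi-\bm\phi)\cdot\bn\rangle_{\partial\T_h}$. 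Its modulus is $O(h^{1/2})$ by the jump bound and $\|P_V\bm\phi-\bm\phi\|_{\partial\T_h}\lesssim h^{1/2}\|\nabla\bm\phi\|_{L^2(\Omega)}$.
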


\begin{proof}
The proof is the compactness mechanism of \cite[Lemma~4.4]{JiangWalkingtonYue2025}, recalled here in the form needed later. The CR lifting satisfies estimates of the form
\[
\|\nabla_h\Lh\widehat w_h\|_{L^2(\Omega)}
\le
C\|\Gh(w_h,\widehat w_h)\|_{L^2(\Omega)}
\]
and
\[
\|w_h-\Lh\widehat w_h\|_{L^2(\Omega)}
\le
C\left(h\|\Gh(w_h,\widehat w_h)\|_{L^2(\Omega)}
+h^{1/2}\|w_h-\widehat w_h\|_{L^2(\partial\T_h)}\right).
\]
Thus $\Lh\widehat w_h$ is bounded in the broken $H^1$ norm of the CR space, while $w_h-\Lh\widehat w_h$ vanishes in $L^2(\Omega)$ along sequences with $h\to0$. The compactness of the CR space yields a subsequence and $w\in H^1(\Omega)$ such that $\Lh\widehat w_h\to w$ strongly in $L^2(\Omega)$ and weakly through its broken gradient. The preceding difference estimate then gives $w_h\to w$ strongly in $L^2(\Omega)$, and the definition of $\Gh$ identifies the weak limit of $\Gh(w_h,\widehat w_h)$ with $\nabla w$. The boundary trace statement follows from the CR trace compactness and the control of $w_h-\widehat w_h$ on boundary faces.
\end{proof}

\section{Mesh-uniform generalized stability}\label{sec:stability}

This section proves the main stability estimate. We first derive an energy identity for the generalized HDG system, then prove a small-mesh stability bound by contradiction and compactness, and finally combine that bound with the coarse-mesh consequence of the estimate in \cite{CuiZhang2014} to obtain an all-mesh constant independent of $h$.

\subsection{Energy identities}
We first present a fundamental energy identity for the auxiliary generalized system.

\begin{lemma}\label{lem:general-energy}
Let $(\bq_h,u_h,\widehat u_h)$ solve \eqref{eq:hdg-general}. Then
\begin{align}
\|\bq_h\|_{\T_h}^2-\kappa^2\|u_h\|_{\T_h}^2
&=
\Re\Big((f,u_h)_{\T_h}+(\bq_h,Q)_{\T_h}+\langle g,\widehat u_h\rangle_{\partial\Omega}\Big),\label{eq:energy-real-general}\\
\|\tau^{1/2}(u_h-\widehat u_h)\|_{\partial\T_h}^2
+\kappa\|\widehat u_h\|_{\partial\Omega}^2
&=
\Im\Big((f,u_h)_{\T_h}+(\bq_h,Q)_{\T_h}+\langle g,\widehat u_h\rangle_{\partial\Omega}\Big).
\label{eq:energy-imag-general}
\end{align}
\end{lemma}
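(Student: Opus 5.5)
The plan is to test the pair formulation of \cref{lem:pair-form} with the solution itself, $(v_h,\widehat v_h)=(u_h,\widehat u_h)$, and to use the gradient form \eqref{eq:q-plus-Gh} of the first equation to rewrite the $\Gh$ term. Taking real and imaginary parts of the resulting scalar identity should give the two claimed identities directly.

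\emph{Step 1.} By \eqref{eq:q-plus-Gh}, $\Gh(u_h,\widehat u_h)=P_VQ-\bq_h$. Since $\bq_h\in V_h$ and $P_V$ is the $L^2$ projection onto $V_h$, we have $(\bq_h,P_VQ)_{\T_h}=(\bq_h,Q)_{\T_h}$. Hence
\[
-(\bq_h,\Gh(u_h,\widehat u_h))_{\T_h}=\|\bq_h\|_{\T_h}^2-(\bq_h,Q)_{\T_h}.
\]
The pairing is conjugate-linear in the second slot, but $P_VQ-\bq_h$ enters there linearly, so this produces no conjugates.

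\emph{Step 2.} Insert $(v_h,\widehat v_h)=(u_h,\widehat u_h)$ into \eqref{eq:pair-form}. Because $\tau$ is real and positive facewise, $\langle\tau(u_h-\widehat u_h),u_h-\widehat u_h\rangle_{\partial\T_h}=\|\tau^{1/2}(u_h-\widehat u_h)\|_{\partial\T_h}^2\ge0$. Likewise $\langle\widehat u_h,\widehat u_h\rangle_{\partial\Omega}=\|\widehat u_h\|_{\partial\Omega}^2$, and $(u_h,u_h)_{\T_h}=\|u_h\|_{\T_h}^2$. Combined with Step 1, this yields
\[
\|\bq_h\|_{\T_h}^2-\kappa^2\|u_h\|_{\T_h}^2
+\ii\Big(\|\tau^{1/2}(u_h-\widehat u_h)\|_{\partial\T_h}^2+\kappa\|\widehat u_h\|_{\partial\Omega}^2\Big)
=(f,u_h)_{\T_h}+(\bq_h,Q)_{\T_h}+\langle g,\widehat u_h\rangle_{\partial\Omega}.
\]

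\emph{Step 3.} The two bracketed expressions on the left are real, since $\kappa>0$ and the norms are real. Taking real parts gives \eqref{eq:energy-real-general}, and taking imaginary parts gives \eqref{eq:energy-imag-general}.

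There is no real obstacle in this argument. The only points requiring care are the sign and conjugation bookkeeping in Step 1, namely that the term $(\bq_h,Q)$ appears unconjugated with a plus sign on the right, and the use of $P_V$ to drop the projection on $Q$.
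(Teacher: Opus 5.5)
Your proof is correct and is essentially the paper's energy argument: both test the system with the discrete solution itself and then separate real and imaginary parts. The only difference is packaging. You reuse \cref{lem:pair-form} and \eqref{eq:q-plus-Gh}, together with $(\bq_h,P_VQ)_{\T_h}=(\bq_h,Q)_{\T_h}$, whereas the paper redoes by hand the elementwise integration by parts, the substitution of the numerical flux, and the testing of \eqref{eq:hdg-general-c}--\eqref{eq:hdg-general-d} with $\widehat u_h$.
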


\begin{proof}
Taking $\btau_h=\bq_h$ in \eqref{eq:hdg-general-a} gives
\[
\|\bq_h\|_{\T_h}^2
-(u_h,\nabla\cdot\bq_h)_{\T_h}
+\langle\widehat u_h,\bq_h\cdot\bn\rangle_{\partial\T_h}
=(Q,\bq_h)_{\T_h}.
\]
Taking the complex conjugate, and using the convention that the $L^2$ pairing is linear in the first argument, gives
\[
\|\bq_h\|_{\T_h}^2
-(\nabla\cdot\bq_h,u_h)_{\T_h}
+\langle\bq_h\cdot\bn,\widehat u_h\rangle_{\partial\T_h}
=(\bq_h,Q)_{\T_h}.
\]
Taking $v_h=u_h$ in \eqref{eq:hdg-general-b} gives
\[
-(\bq_h,\nabla u_h)_{\T_h}
-\kappa^2\|u_h\|_{\T_h}^2
+\langle\widehat{\bq}_h\cdot\bn,u_h\rangle_{\partial\T_h}
=(f,u_h)_{\T_h}.
\]
Adding the last two identities and using elementwise integration by parts yields
\[
\|\bq_h\|_{\T_h}^2-\kappa^2\|u_h\|_{\T_h}^2
+\langle(\widehat{\bq}_h-\bq_h)\cdot\bn,u_h-\widehat u_h\rangle_{\partial\T_h}
+\langle\widehat{\bq}_h\cdot\bn,\widehat u_h\rangle_{\partial\T_h}
=(f,u_h)_{\T_h}+(\bq_h,Q)_{\T_h}.
\]
Using \eqref{eq:numerical-flux},
\[
\langle(\widehat{\bq}_h-\bq_h)\cdot\bn,u_h-\widehat u_h\rangle_{\partial\T_h}
=
\ii\|\tau^{1/2}(u_h-\widehat u_h)\|_{\partial\T_h}^2.
\]
Testing \eqref{eq:hdg-general-c}--\eqref{eq:hdg-general-d} with $\widehat u_h$ gives
\[
\langle\widehat{\bq}_h\cdot\bn,\widehat u_h\rangle_{\partial\T_h}
=
\ii\kappa\|\widehat u_h\|_{\partial\Omega}^2
-\langle g,\widehat u_h\rangle_{\partial\Omega}.
\]
Substitution gives
\[
\|\bq_h\|_{\T_h}^2-\kappa^2\|u_h\|_{\T_h}^2
+\ii\|\tau^{1/2}(u_h-\widehat u_h)\|_{\partial\T_h}^2
+\ii\kappa\|\widehat u_h\|_{\partial\Omega}^2
=(f,u_h)_{\T_h}+(\bq_h,Q)_{\T_h}+\langle g,\widehat u_h\rangle_{\partial\Omega}.
\]
Taking real and imaginary parts proves the claim.
\end{proof}

\subsection{Small-mesh stability}
The following theorem is the compactness step that removes the mesh-singular constants from the small-mesh regime. It rules out loss of stability as $h\to0$.

\begin{theorem}\label{thm:small-stability}
Let $0<\kappa_0\le \kappa\le\kappa_1<\infty$ and assume \eqref{eq:tau-bounds-uniform}. There exist $h_0>0$ and $C_{\rm sm}>0$, depending on $\Omega$, $p$, the shape-regularity constant, $\kappa_0$, $\kappa_1$, $\tau_0$, and $\tau_1$, but not on $h$ or on the particular $\kappa\in[\kappa_0,\kappa_1]$, such that for every $0<h\le h_0$ and every solution of \eqref{eq:hdg-general},
\begin{align}
&\|\bq_h\|_{\T_h}^2+
\kappa^2\|u_h\|_{\T_h}^2+
\kappa\|\widehat u_h\|_{\partial\Omega}^2+
\|\tau^{1/2}(u_h-\widehat u_h)\|_{\partial\T_h}^2
\notag\\
&\hspace{6cm}\le
C_{\rm sm}
\left(
\|f\|_{\T_h}^2+
\|g\|_{\partial\Omega}^2+
\|Q\|_{\T_h}^2
\right).
\label{eq:small-stability}
\end{align}
\end{theorem}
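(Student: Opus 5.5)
We argue by contradiction, combined with the compactness result \cref{thm:HDG-compactness}. Suppose the estimate fails. Then there are sequences $h_n\to0$, $\kappa_n\in[\kappa_0,\kappa_1]$, stabilizations $\tau_n$ satisfying \eqref{eq:tau-bounds-uniform}, data $(f_n,g_n,Q_n)$, and solutions $(\bq_n,u_n,\widehat u_n)$ of \eqref{eq:hdg-general}. Normalize so that the left-hand side of \eqref{eq:small-stability} equals $1$ while $\|f_n\|+\|g_n\|+\|Q_n\|\to0$. Passing to a subsequence, we may assume $\kappa_n\to\kappa_*\in[\kappa_0,\kappa_1]$.

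By \eqref{eq:q-plus-Gh}, $\Gh(u_n,\widehat u_n)=P_VQ_n-\bq_n$ is bounded. Since $\tau_n\ge\tau_0$ and $\kappa_n\ge\kappa_0$, the norms $\|u_n\|$ and $\|u_n-\widehat u_n\|_{\partial\T_h}$ are also bounded. So \cref{thm:HDG-compactness} applies and yields $u\in H^1(\Omega)$ with three properties: $u_n\to u$ in $L^2$, $\Gh(u_n,\widehat u_n)\rightharpoonup\nabla u$, and $\widehat u_n\rightharpoonup u$ weakly in $L^2(\partial\Omega)$. In particular $\bq_n\rightharpoonup-\nabla u$.

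Next we identify the limit. The imaginary energy identity \eqref{eq:energy-imag-general} has right-hand side bounded by the bounded unknowns times data tending to zero. Hence $\kappa_n\|\widehat u_n\|_{\partial\Omega}^2\to0$ and $\|\tau_n^{1/2}(u_n-\widehat u_n)\|^2_{\partial\T_h}\to0$. Consequently $u|_{\partial\Omega}=0$ by weak lower semicontinuity.

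To obtain the limit equation, fix a smooth test function $v$ and use the pair form \cref{lem:pair-form} with $(v_h,\widehat v_h)=\iota_{h}v$. By \cref{lem:iota-consistency}, $\Gh(\iota_h v)\to\nabla v$ strongly, so $(\bq_n,\Gh(\iota_hv))\to-(\nabla u,\nabla v)$. The $\tau$-term is bounded by $\tau_1^{1/2}\|\tau_n^{1/2}(u_n-\widehat u_n)\|\cdot Ch^{1/2}\|\nabla v\|\to0$ using \eqref{eq:iota-jump-small}. The boundary term $\ii\kappa_n\langle\widehat u_n,P_Mv\rangle$ tends to $0$. The data terms vanish. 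Therefore
\[
(\nabla u,\nabla v)-\kappa_*^2(u,v)=0\qquad\forall v\in H^1(\Omega).
\]
Together with $u=0$ on $\partial\Omega$, this says $\partial_{\bn}u=0$ weakly as well. Cauchy data vanish, so unique continuation (extending $u$ by zero outside $\Omega$, which gives a Helmholtz solution on $\R^d$ with compact support) forces $u=0$. This identification step, with the term-by-term passage to the limit, is routine.

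The main obstacle is upgrading weak convergence to a contradiction with the normalization, because $\bq_n$ only converges weakly. Strong convergence $u_n\to0$ gives $\kappa_n^2\|u_n\|^2\to0$. The real energy identity \eqref{eq:energy-real-general} then gives $\|\bq_n\|^2=\kappa_n^2\|u_n\|^2+o(1)\to0$; its right-hand side is $o(1)$ because the unknowns are bounded and the data vanish. Combined with the imaginary part, all four terms on the left of \eqref{eq:small-stability} tend to zero, contradicting the normalization. The point needing care is that every constant used is uniform in $\kappa_n$ and $\tau_n$ via $\kappa_0,\kappa_1,\tau_0,\tau_1$, so the contradiction covers the whole interval. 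This yields $h_0$ and $C_{\rm sm}$ as stated.
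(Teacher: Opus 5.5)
Your proposal is correct and follows the paper's proof essentially step for step: contradiction with normalized left-hand side and vanishing data, the imaginary and real energy identities, HDG compactness, passage to the limit in the pair form with $\iota_h v$, and zero-Cauchy-data uniqueness. The differences are cosmetic. The paper first derives $\kappa_n^2\|u_n\|_{\T_{h_n}}^2\to 1/2$ and then contradicts it with $u=0$, whereas you show all four normalized terms vanish once $u=0$. You also prove uniqueness by extending $u$ by zero to all of $\R^d$ instead of using the local flat-face extension of Lemma~\ref{lem:zero-cauchy}; both arguments are valid.
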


\begin{proof}
Suppose the result is false. Then there are $h_n\to0$, wave numbers $\kappa_n\in[\kappa_0,\kappa_1]$, stabilizations satisfying \eqref{eq:tau-bounds-uniform}, data $(f_n,g_n,Q_n)$, and solutions $(\bq_n,u_n,\widehat u_n)$ such that
\begin{equation}\label{eq:contra-normalization}
\|\bq_n\|_{\T_{h_n}}^2+
\kappa_n^2\|u_n\|_{\T_{h_n}}^2+
\kappa_n\|\widehat u_n\|_{\partial\Omega}^2+
\|\tau_n^{1/2}(u_n-\widehat u_n)\|_{\partial\T_{h_n}}^2=1,
\end{equation}
and
\begin{equation}\label{eq:data-to-zero}
\|f_n\|_{\T_{h_n}}+
\|g_n\|_{\partial\Omega}+
\|Q_n\|_{\T_{h_n}}\to0.
\end{equation}
Passing to a subsequence, $\kappa_n\to\kappa_*\in[\kappa_0,\kappa_1]$.

By Cauchy's inequality, \eqref{eq:contra-normalization}, $\kappa_n\ge\kappa_0$, and \eqref{eq:data-to-zero}, the right-hand side of \eqref{eq:energy-imag-general} tends to zero. Hence
\begin{equation}\label{eq:jump-trace-zero}
\|\tau_n^{1/2}(u_n-\widehat u_n)\|_{\partial\T_{h_n}}^2+
\kappa_n\|\widehat u_n\|_{\partial\Omega}^2\to0.
\end{equation}
Since $\tau_n\ge\tau_0$ and $\kappa_n\ge\kappa_0$, it follows that
\begin{equation}\label{eq:unweighted-jump-trace-zero}
\|u_n-\widehat u_n\|_{\partial\T_{h_n}}\to0,
\qquad
\|\widehat u_n\|_{\partial\Omega}\to0.
\end{equation}
The real part identity \eqref{eq:energy-real-general}, again using \eqref{eq:contra-normalization} and \eqref{eq:data-to-zero}, gives
\begin{equation}\label{eq:real-diff-zero}
\|\bq_n\|_{\T_{h_n}}^2-
\kappa_n^2\|u_n\|_{\T_{h_n}}^2\to0.
\end{equation}
Combining \eqref{eq:contra-normalization}, \eqref{eq:jump-trace-zero}, and \eqref{eq:real-diff-zero}, we obtain
\begin{equation}\label{eq:u-nonzero-limit}
\kappa_n^2\|u_n\|_{\T_{h_n}}^2\to\frac12,
\qquad
\|\bq_n\|_{\T_{h_n}}^2\to\frac12.
\end{equation}

By \eqref{eq:q-plus-Gh},
\[
\Gh(u_n,\widehat u_n)=P_{V_{h_n}}Q_n-\bq_n.
\]
Since $Q_n\to0$ strongly in $L^2(\Omega)^d$ and $\bq_n$ is bounded in $L^2(\Omega)^d$, the sequence $\Gh(u_n,\widehat u_n)$ is bounded in $L^2(\Omega)^d$. Together with \eqref{eq:contra-normalization} and \eqref{eq:unweighted-jump-trace-zero}, \cref{thm:HDG-compactness} gives a subsequence, not relabeled, and $u\in H^1(\Omega)$ such that
\[
u_n\to u\quad\text{strongly in }L^2(\Omega),
\qquad
\Gh(u_n,\widehat u_n)\rightharpoonup \nabla u\quad\text{weakly in }L^2(\Omega)^d.
\]
Moreover, since $\widehat u_n|_{\partial\Omega}\rightharpoonup u|_{\partial\Omega}$ by compactness and $\widehat u_n\to0$ strongly in $L^2(\partial\Omega)$ by \eqref{eq:unweighted-jump-trace-zero},
\begin{equation}\label{eq:limit-dirichlet-zero}
u|_{\partial\Omega}=0.
\end{equation}
After extracting a further subsequence, $\bq_n\rightharpoonup\bq$ weakly in $L^2(\Omega)^d$. Passing to the limit in
\[
\bq_n+\Gh(u_n,\widehat u_n)=P_{V_{h_n}}Q_n
\]
gives
\begin{equation}\label{eq:q-limit-gradient}
\bq+\nabla u=0
\qquad\text{in }L^2(\Omega)^d.
\end{equation}

Let $v\in H^1(\Omega)$ and set $(v_n,\widehat v_n)=\iota_{h_n}v=(P_Wv,P_Mv)$. The pair formulation gives
\begin{align*}
&-(\bq_n,\Gh(v_n,\widehat v_n))_{\T_{h_n}}
-\kappa_n^2(u_n,v_n)_{\T_{h_n}}
+\ii\langle\tau_n(u_n-\widehat u_n),v_n-\widehat v_n\rangle_{\partial\T_{h_n}}\\
&\qquad
+\ii\kappa_n\langle\widehat u_n,\widehat v_n\rangle_{\partial\Omega}
=
(f_n,v_n)_{\T_{h_n}}+\langle g_n,\widehat v_n\rangle_{\partial\Omega}.
\end{align*}
The right-hand side tends to zero by \eqref{eq:data-to-zero} and the boundedness of $v_n$ and $\widehat v_n$. By Lemma \ref{lem:iota-consistency}, $\Gh(v_n,\widehat v_n)\to\nabla v$ in $L^2(\Omega)^d$ and $v_n\to v$ in $L^2(\Omega)$. The stabilization term tends to zero because $\|u_n-\widehat u_n\|_{\partial\T_{h_n}}\to0$, $\tau_n\le\tau_1$, and $\|v_n-\widehat v_n\|_{\partial\T_{h_n}}\le C h_n^{1/2}\|\nabla v\|_{L^2(\Omega)}$. The boundary term tends to zero because $\widehat u_n\to0$ in $L^2(\partial\Omega)$ and $\widehat v_n$ is bounded in $L^2(\partial\Omega)$. Therefore
\[
-(\bq,\nabla v)_\Omega-\kappa_*^2(u,v)_\Omega=0
\qquad\forall v\in H^1(\Omega).
\]
Using \eqref{eq:q-limit-gradient}, we obtain
\begin{equation}\label{eq:limit-homogeneous-weak}
(\nabla u,\nabla v)_\Omega-\kappa_*^2(u,v)_\Omega=0
\qquad\forall v\in H^1(\Omega).
\end{equation}

Equation \eqref{eq:limit-homogeneous-weak} with $v\in H_0^1(\Omega)$ gives
\[
-\Delta u-\kappa_*^2u=0
\qquad\text{in }H^{-1}(\Omega).
\]
Since $u\in L^2(\Omega)$, this implies $\Delta u=-\kappa_*^2u\in L^2(\Omega)$ in the distributional sense. Let $\gamma:H^1(\Omega)\to H^{1/2}(\partial\Omega)$ denote the trace operator. The weak normal trace of $u$ is defined by
\[
\langle\partial_{\bn}u,\gamma v\rangle_{\partial\Omega}
=
(\nabla u,\nabla v)_\Omega+(\Delta u,v)_\Omega,
\qquad v\in H^1(\Omega).
\]
Using $\Delta u=-\kappa_*^2u$ and \eqref{eq:limit-homogeneous-weak}, this normal trace is zero. Together with \eqref{eq:limit-dirichlet-zero}, the zero-Cauchy-data uniqueness lemma in Appendix~\ref{app:zero-cauchy} gives $u=0$. This contradicts the strong convergence $u_n\to u$ and \eqref{eq:u-nonzero-limit}, which imply $\|u\|_{L^2(\Omega)}>0$. The contradiction proves the theorem.
\end{proof}

\subsection{Global all-mesh stability}
The next theorem combines the small-mesh compactness estimate with the coarse-mesh consequence of \cite[Theorem~3.1]{CuiZhang2014}.

\begin{theorem}\label{thm:main-stability}
Let the assumptions of \cref{thm:small-stability} hold. Then for every $h>0$ and every solution of \eqref{eq:hdg-general},
\begin{align}
&\|\bq_h\|_{\T_h}^2+
\kappa^2\|u_h\|_{\T_h}^2+
\kappa\|\widehat u_h\|_{\partial\Omega}^2+
\|\tau^{1/2}(u_h-\widehat u_h)\|_{\partial\T_h}^2
\notag\\
&\hspace{6cm}\le
C_{\rm st}
\left(
\|f\|_{\T_h}^2+
\|g\|_{\partial\Omega}^2+
\|Q\|_{\T_h}^2
\right),
\label{eq:main-stability}
\end{align}
where $C_{\rm st}$ is independent of $h$. The constant can be chosen uniformly for $\kappa\in[\kappa_0,\kappa_1]$ under \eqref{eq:tau-bounds-uniform}.
\end{theorem}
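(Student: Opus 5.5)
The plan is to split into the two mesh regimes $h\le h_0$ and $h>h_0$, where $h_0$ is the threshold from \cref{thm:small-stability}. For $0<h\le h_0$ there is nothing to do: \cref{thm:small-stability} gives \eqref{eq:main-stability} with $C_{\rm st}\ge C_{\rm sm}$, uniformly for $\kappa\in[\kappa_0,\kappa_1]$.

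For $h>h_0$ I would invoke \cite[Theorem~3.1]{CuiZhang2014}. Its bound for the generalized system with the auxiliary right-hand side $Q$ has the constant $C_{\rm CZ}(h,\kappa,\tau)$ of \eqref{eq:intro-CZ-constant}. On this regime the mesh-singular terms obey $\tau_{\min}^{-1}h^{-3}\le\tau_0^{-1}h_0^{-3}$ and $\tau_{\max}h^{-1}\le\tau_1h_0^{-1}$. The remaining terms obey $\kappa^2\tau_{\min}^{-1}+\kappa\le\kappa_1^2\tau_0^{-1}+\kappa_1$. Hence
\[
C_{\rm CZ}(h,\kappa,\tau)\le\{\kappa_1^2\tau_0^{-1}+\kappa_1+\tau_0^{-1}h_0^{-3}+\tau_1h_0^{-1}\}^2+1=:C_{\rm co},
\]
and this bound is independent of $h>h_0$ and of $\kappa\in[\kappa_0,\kappa_1]$. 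Since $h\le\operatorname{diam}(\Omega)$ always, the coarse regime is in any case a bounded range of mesh sizes.

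The step needing care, and the main obstacle, is matching the norms. The left-hand side in \cite{CuiZhang2014} may be stated in a different weighting: for instance without the factors $\kappa^2$, $\kappa$, $\tau^{1/2}$, or with $\tau_{\min}$ pulled out. The data norms may also be weighted differently. Because $\kappa\in[\kappa_0,\kappa_1]$ and $\tau\in[\tau_0,\tau_1]$, every such weight lies between two positive constants depending only on $\kappa_0,\kappa_1,\tau_0,\tau_1$. The two formulations are therefore equivalent up to such constants, and I would absorb them into $C_{\rm co}$.

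If the cited estimate controls only $\bq_h$ and $u_h$, I would recover the remaining quantities from the imaginary energy identity \eqref{eq:energy-imag-general}. Cauchy--Schwarz and Young's inequality give
\[
\|\tau^{1/2}(u_h-\widehat u_h)\|_{\partial\T_h}^2+\kappa\|\widehat u_h\|_{\partial\Omega}^2\le\|f\|\|u_h\|+\|Q\|\|\bq_h\|+\tfrac{1}{2\kappa}\|g\|^2_{\partial\Omega}+\tfrac{\kappa}{2}\|\widehat u_h\|^2_{\partial\Omega}.
\]
The trace term is then absorbed into the left-hand side, and $\|u_h\|$, $\|\bq_h\|$ are bounded by the already controlled quantities. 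If I need well-posedness rather than an a priori bound, uniqueness for every $h$ follows from either regime's estimate applied with zero data.

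Finally I would set $C_{\rm st}=\max\{C_{\rm sm},C'C_{\rm co}\}$, where $C'$ collects the norm-equivalence constants. This constant is independent of $h$ and uniform over $\kappa\in[\kappa_0,\kappa_1]$.
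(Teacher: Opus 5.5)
Your proposal is correct and follows essentially the same route as the paper. You split at the threshold $h_0$ of \cref{thm:small-stability} and use it for small meshes; on coarse meshes you invoke Cui--Zhang's Theorem~3.1, bounding the $h^{-3}$ and $h^{-1}$ terms by $h_0^{-3}$ and $h_0^{-1}$ and the remaining terms via $\kappa\in[\kappa_0,\kappa_1]$ and $\tau\in[\tau_0,\tau_1]$, then recover the jump and boundary-trace terms from \eqref{eq:energy-imag-general} with Young's inequality and absorption, and take the maximum of the two constants.
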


\begin{proof}
For $0<h\le h_0$, the estimate is \cref{thm:small-stability}. It remains to consider $h\ge h_0$. In this regime we use the generalized stability theorem \cite[Theorem~3.1]{CuiZhang2014}. It gives
\[
\|\bq_h\|_{\T_h}^2+\kappa^2\|u_h\|_{\T_h}^2
\le
C_{\rm CZ}(h,\kappa,\tau)
\left(
\|f\|_{\T_h}^2+
\|g\|_{\partial\Omega}^2+
\|Q\|_{\T_h}^2
\right),
\]
with $C_{\rm CZ}$ as in \eqref{eq:intro-CZ-constant}, up to constants depending only on the fixed geometric quantities. Since $h\ge h_0$, the factors $h^{-1}$ and $h^{-3}$ are bounded by $h_0^{-1}$ and $h_0^{-3}$. Since $\kappa\in[\kappa_0,\kappa_1]$ and $\tau$ satisfies \eqref{eq:tau-bounds-uniform}, this gives
\[
\|\bq_h\|_{\T_h}+\kappa\|u_h\|_{\T_h}
\le C D,
\qquad
D^2:=\|f\|_{\T_h}^2+\|g\|_{\partial\Omega}^2+\|Q\|_{\T_h}^2,
\]
where $C$ is independent of the current mesh size $h$.

It remains to control the jump and boundary trace terms. From \eqref{eq:energy-imag-general},
\[
\|\tau^{1/2}(u_h-\widehat u_h)\|_{\partial\T_h}^2+
\kappa\|\widehat u_h\|_{\partial\Omega}^2
\le
\|f\|_{\T_h}\|u_h\|_{\T_h}
+\|Q\|_{\T_h}\|\bq_h\|_{\T_h}
+\|g\|_{\partial\Omega}\|\widehat u_h\|_{\partial\Omega}.
\]
The first two terms are bounded by $CD^2$. For the last term, Young's inequality and $\kappa\ge\kappa_0$ give
\[
\|g\|_{\partial\Omega}\|\widehat u_h\|_{\partial\Omega}
\le
\frac{1}{2\kappa}\|g\|_{\partial\Omega}^2
+\frac{\kappa}{2}\|\widehat u_h\|_{\partial\Omega}^2
\le
CD^2+\frac{\kappa}{2}\|\widehat u_h\|_{\partial\Omega}^2.
\]
Absorbing the last term proves the desired estimate for $h\ge h_0$. Taking the maximum of the small-mesh and coarse-mesh constants completes the proof.
\end{proof}

\begin{corollary}\label{cor:wellposedness}
For every $h>0$, the generalized HDG system \eqref{eq:hdg-general} has a unique solution. In particular, the original HDG scheme corresponding to $Q=0$ is well posed.
\end{corollary}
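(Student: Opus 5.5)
The system \eqref{eq:hdg-general} is a square linear system: for fixed $h$, the unknowns $(\bq_h,u_h,\widehat u_h)$ range over the finite-dimensional space $V_h\times W_h\times M_h$. The test functions $(\btau_h,v_h,\mu_h)$ range over the same space, with \eqref{eq:hdg-general-c}--\eqref{eq:hdg-general-d} together testing against all of $M_h$, since every face is either interior or on $\partial\Omega$. The data $(f,g,Q)$ enter only on the right-hand side and linearly. The plan is therefore to show injectivity and deduce existence from the rank--nullity theorem.

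First, I will check that the number of equations equals the number of unknowns. After testing, the left-hand side defines a linear map $A_h:V_h\times W_h\times M_h\to (V_h\times W_h\times M_h)^*$ between spaces of equal dimension. The right-hand side is the functional $(\btau_h,v_h,\mu_h)\mapsto (Q,\btau_h)_{\T_h}+(f,v_h)_{\T_h}+\langle g,\mu_h\rangle_{\partial\Omega}$. One point needs care. Equation \eqref{eq:hdg-general-d} is written over $\partial\T_h\setminus\partial\Omega$, counting both sides of each interior face, while $\mu_h$ is single-valued. So each interior face contributes exactly one scalar polynomial equation per degree of freedom of $M_h$ on that face, which matches the unknown $\widehat u_h|_F$. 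The count therefore balances face by face.

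Second, I will prove injectivity. Suppose $(\bq_h,u_h,\widehat u_h)$ solves the system with $f=0$, $g=0$, $Q=0$. Then \cref{thm:main-stability}, which is valid for every $h>0$, gives
\[
\|\bq_h\|_{\T_h}^2+\kappa^2\|u_h\|_{\T_h}^2+\kappa\|\widehat u_h\|_{\partial\Omega}^2+\|\tau^{1/2}(u_h-\widehat u_h)\|_{\partial\T_h}^2\le 0.
\]
Hence $\bq_h=0$ and $u_h=0$, because $\kappa>0$. Since $\tau>0$ facewise, we also get $u_h-\widehat u_h=0$ on every face, so $\widehat u_h=u_h|_F=0$ on each face. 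Each face is adjacent to at least one element, so this covers all faces. Thus $\ker A_h=\{0\}$.

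Third, injectivity plus equal dimensions gives that $A_h$ is bijective. So a unique solution exists for every $(f,g,Q)$, and in particular for $Q=0$. The only step needing attention is the dimension bookkeeping for the skeletal equations, which I handled in the first step. Everything else follows directly from \cref{thm:main-stability}.
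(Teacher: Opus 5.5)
Your proposal is correct and follows the paper's argument: \cref{thm:main-stability} with $f=g=Q=0$ gives injectivity, and existence for every right-hand side then follows because the system is square and finite-dimensional. You also supply two details the paper leaves implicit: the face-by-face dimension count for the skeletal equations, and the deduction of $\widehat u_h=0$ on interior faces from $u_h=0$ and the vanishing stabilization term, which is needed because the stability norm controls $\widehat u_h$ directly only on $\partial\Omega$.
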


\begin{proof}
The stability estimate immediately implies uniqueness for the homogeneous discrete system. Indeed, if $f=g=Q=0$, then \cref{thm:main-stability} gives $\bq_h=0$, $u_h=0$, and $\widehat u_h=0$. Since the HDG system is a square finite-dimensional linear system, uniqueness for the homogeneous system is equivalent to existence and uniqueness for every right-hand side.
\end{proof}

\section{Consequences of mesh-uniform stability}\label{sec:consequences}

\subsection{Convergence for minimal data}\label{sec:convergence}
The mesh-uniform estimate gives bounds on $u_h$, $\bq_h$, the boundary trace $\widehat u_h$, and the stabilization jump that do not deteriorate under mesh refinement. Combined with HDG compactness, these bounds allow us to pass to the limit in the discrete equations for data merely in $L^2$.

\begin{theorem}\label{thm:convergence}
Let $Q=0$, $f\in L^2(\Omega)$, and $g\in L^2(\partial\Omega)$. Let $(\bq_h,u_h,\widehat u_h)\in V_h\times W_h\times M_h$ be the unique solution of \eqref{eq:hdg-general} with $Q=0$. Then, as $h\to0$,
\begin{align}
u_h&\to u &&\text{strongly in }L^2(\Omega),\label{eq:conv-u}\\
\bq_h&\rightharpoonup -\nabla u &&\text{weakly in }L^2(\Omega)^d,\label{eq:conv-q}\\
\widehat u_h|_{\partial\Omega}&\rightharpoonup u|_{\partial\Omega} &&\text{weakly in }L^2(\partial\Omega),\label{eq:conv-trace}
\end{align}
where $u\in H^1(\Omega)$ is the unique weak solution of
\begin{equation}\label{eq:continuous-weak-robin}
(\nabla u,\nabla v)_\Omega-
\kappa^2(u,v)_\Omega+
\ii\kappa\langle u,v\rangle_{\partial\Omega}
=(f,v)_\Omega+
\langle g,v\rangle_{\partial\Omega}
\qquad\forall v\in H^1(\Omega).
\end{equation}
\end{theorem}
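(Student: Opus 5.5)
The argument combines the mesh-uniform bound of \cref{thm:main-stability}, the compactness result \cref{thm:HDG-compactness}, and uniqueness for the continuous Robin problem. This identifies every subsequential limit, and the whole family then converges.

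\textbf{Uniform bounds.} Fix $\kappa$ and take $Q=0$. By \cref{thm:main-stability}, the quantities
\[
\|\bq_h\|_{\T_h},\quad \|u_h\|_{\T_h},\quad \|\widehat u_h\|_{\partial\Omega},\quad \|u_h-\widehat u_h\|_{\partial\T_h}
\]
are all bounded by $C(\|f\|_{\Omega}+\|g\|_{\partial\Omega})$, with $C$ independent of $h$. Here we use $\tau\ge\tau_{\min}$ to pass from the weighted jump to the unweighted one. Since $Q=0$, \eqref{eq:q-plus-Gh} gives $\Gh(u_h,\widehat u_h)=-\bq_h$, so $\Gh(u_h,\widehat u_h)$ is bounded as well. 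The hypothesis \eqref{eq:compactness-bound} therefore holds.

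\textbf{Extracting a subsequence.} Take any sequence $h_n\to0$. \cref{thm:HDG-compactness} gives a subsequence and a function $u\in H^1(\Omega)$ with
\[
u_n\to u \text{ in } L^2(\Omega),\qquad \Gh(u_n,\widehat u_n)\rightharpoonup\nabla u,\qquad \widehat u_n|_{\partial\Omega}\rightharpoonup u|_{\partial\Omega}.
\]
Consequently $\bq_n=-\Gh(u_n,\widehat u_n)\rightharpoonup-\nabla u$.

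\textbf{Passing to the limit.} Test the pair formulation \eqref{eq:pair-form} with $(v_n,\widehat v_n)=\iota_{h_n}v$ for a fixed $v\in H^1(\Omega)$. Each term is handled as follows:
\begin{itemize}
\item By \cref{lem:iota-consistency}, $\Gh(v_n,\widehat v_n)\to\nabla v$ strongly. Paired with the weak convergence of $\bq_n$, this gives $-(\bq_n,\Gh(v_n,\widehat v_n))\to(\nabla u,\nabla v)$.
\item The mass term converges to $-\kappa^2(u,v)$ by strong convergence of both factors.
\item The stabilization term is bounded by $\tau_{\max}\,\|u_n-\widehat u_n\|_{\partial\T_h}\,Ch_n^{1/2}\|\nabla v\|$. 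By \eqref{eq:iota-jump-small} and the uniform jump bound, this tends to zero.
\item For the boundary term, $\widehat v_n=P_Mv\to v$ strongly in $L^2(\partial\Omega)$ by \eqref{eq:PM-conv}, while $\widehat u_n\rightharpoonup u$ weakly. Hence $\ii\kappa\langle\widehat u_n,\widehat v_n\rangle\to\ii\kappa\langle u,v\rangle_{\partial\Omega}$.
\item On the right-hand side, $(f,P_Wv)\to(f,v)$ and $\langle g,P_Mv\rangle\to\langle g,v\rangle_{\partial\Omega}$.
\end{itemize}
Hence $u$ satisfies \eqref{eq:continuous-weak-robin}.

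\textbf{Uniqueness and conclusion.} The main obstacle is to show that \eqref{eq:continuous-weak-robin} has at most one solution. Let $w$ solve the homogeneous problem. Testing with $v=w$ and taking imaginary parts gives $\kappa\|w\|_{\partial\Omega}^2=0$, so $w|_{\partial\Omega}=0$. The equation then says the weak normal derivative vanishes too, exactly as in the proof of \cref{thm:small-stability}. The zero-Cauchy-data lemma of Appendix~\ref{app:zero-cauchy} yields $w=0$.

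Existence follows because the compactness argument produced a solution; alternatively, Fredholm theory combined with uniqueness gives it. So the limit $u$ is the same for every subsequence. A standard subsequence-of-subsequence argument then upgrades the convergence to the whole family $h\to0$, for all three statements \eqref{eq:conv-u}--\eqref{eq:conv-trace}.
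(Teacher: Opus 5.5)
Your proposal is correct and follows essentially the same route as the paper's proof. Both obtain the uniform bounds from \cref{thm:main-stability} together with $\Gh(u_h,\widehat u_h)=-\bq_h$, apply \cref{thm:HDG-compactness}, and pass to the limit term by term in \eqref{eq:pair-form} with the test pair $\iota_h v$; they then prove uniqueness for the Robin problem (imaginary part, then the zero-Cauchy-data lemma) and conclude with the subsequence argument.
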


\begin{proof}
By \cref{thm:main-stability}, $u_h$, $\bq_h$, $\widehat u_h|_{\partial\Omega}$, and $u_h-\widehat u_h$ are bounded in the relevant norms. Since $Q=0$, \eqref{eq:q-plus-Gh} gives $\Gh(u_h,\widehat u_h)=-\bq_h$, so $\Gh(u_h,\widehat u_h)$ is also bounded. \Cref{thm:HDG-compactness} gives a subsequence and $u\in H^1(\Omega)$ satisfying \eqref{eq:conv-u} and \eqref{eq:conv-trace}, and
\[
\Gh(u_h,\widehat u_h)\rightharpoonup \nabla u.
\]
Since $\bq_h=-\Gh(u_h,\widehat u_h)$, \eqref{eq:conv-q} holds along the subsequence.

Let $v\in H^1(\Omega)$ and set $(v_h,\widehat v_h)=\iota_hv=(P_Wv,P_Mv)$. For this choice of test pair, \eqref{eq:pair-form} reads
\begin{align*}
&-(\bq_h,\Gh(P_Wv,P_Mv))_{\T_h}
-\kappa^2(u_h,P_Wv)_{\T_h}
+\ii\langle\tau(u_h-\widehat u_h),P_Wv-P_Mv\rangle_{\partial\T_h}\\
&\qquad
+\ii\kappa\langle\widehat u_h,P_Mv\rangle_{\partial\Omega}
=(f,P_Wv)_{\T_h}+\langle g,P_Mv\rangle_{\partial\Omega}.
\end{align*}
The right-hand side converges to $(f,v)_\Omega+\langle g,v\rangle_{\partial\Omega}$ by the strong convergence of $P_Wv$ and $P_Mv$. The first two left-hand side terms converge by the weak convergence of $\bq_h$, the strong convergence of $\Gh(P_Wv,P_Mv)$, the strong convergence of $u_h$, and the strong convergence of $P_Wv$. The stabilization term satisfies
\[
\left|\langle\tau(u_h-\widehat u_h),P_Wv-P_Mv\rangle_{\partial\T_h}\right|
\le
C\|u_h-\widehat u_h\|_{\partial\T_h}\,h^{1/2}\|\nabla v\|_{L^2(\Omega)}
\to0.
\]
The boundary term converges to $\ii\kappa\langle u,v\rangle_{\partial\Omega}$ by the weak convergence of $\widehat u_h|_{\partial\Omega}$ and the strong convergence of $P_Mv$ on the boundary. Therefore $u$ satisfies \eqref{eq:continuous-weak-robin}.

The continuous problem \eqref{eq:continuous-weak-robin} is uniquely solvable. For homogeneous data, taking $v=u$ and taking imaginary parts gives $\kappa\|u\|_{L^2(\partial\Omega)}^2=0$, hence $u=0$ on $\partial\Omega$. The weak formulation then gives the homogeneous Helmholtz equation in $\Omega$ and zero weak normal trace on $\partial\Omega$. The zero-Cauchy-data uniqueness lemma in Appendix~\ref{app:zero-cauchy} gives $u=0$. Thus every subsequence has the same limit, and the whole sequence converges.
\end{proof}

\subsection{Projection error estimates}\label{sec:error}

This subsection applies \cref{thm:main-stability} to the projection error equations. This is the step that removes the negative powers of $h$ from the error analysis.

Let $(\bq,u)$ be a sufficiently smooth solution of \eqref{eq:helmholtz-mixed}. Let $(\pi\bq,\Pi u)$ denote the local HDG projection used in \cite{CockburnGopalakrishnanSayas2010,CuiZhang2014}. On each element $K$, it is defined by the moment conditions
\begin{align*}
(\pi\bq-\bq,\br)_K&=0 &&\forall \br\in \Pp_{p-1}(K)^d,\\
(\Pi u-u,w)_K&=0 &&\forall w\in \Pp_{p-1}(K),\\
\langle (\pi\bq-\bq)\cdot\bn+\ii\tau(\Pi u-u),\mu\rangle_F&=0 &&\forall \mu\in\Pp_p(F),\quad F\subset\partial K.
\end{align*}
For $p=0$, the first two moment conditions are void under the convention $\Pp_{-1}=\{0\}$.

Define the projected errors
\[
\epsq=\pi\bq-\bq_h,
\qquad
\epsu=\Pi u-u_h,
\qquad
\epsuh=P_Mu-\widehat u_h,
\]
and the corresponding numerical flux error by
\[
\widehat{\bm\varepsilon}_h\cdot\bn
=
\epsq\cdot\bn+\ii\tau(\epsu-\epsuh).
\]
The following lemma shows that the projected error satisfies the same auxiliary generalized HDG system. In this system the auxiliary right-hand side $Q$ is the projection residual $\pi\bq-\bq$ in the first mixed equation.

\begin{lemma}\label{lem:error-equations}
The triple $(\epsq,\epsu,\epsuh)$ satisfies \eqref{eq:hdg-general} with
\begin{equation}\label{eq:error-data}
Q=\pi\bq-\bq,
\qquad
f=-\kappa^2(\Pi u-u),
\qquad
g=0.
\end{equation}
\end{lemma}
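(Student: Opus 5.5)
The plan is to verify each of the four equations \eqref{eq:hdg-general-a}--\eqref{eq:hdg-general-d} separately. First, note that the exact solution $(\bq,u)$, with trace $\widehat u=u$ and flux $\widehat\bq=\bq$, satisfies the same HDG-type identities for all discrete test functions. Then subtract the discrete equations and replace $(\bq,u,u|_{\E_h})$ by $(\pi\bq,\Pi u,P_Mu)$. The moment conditions defining the projection, together with the $L^2$ projection $P_M$, should leave residuals only in the first equation (the $Q$-term) and in the mass term (the $f$-term).

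\textbf{First equation.} Integration by parts of $\bq+\nabla u=0$ against $\btau_h\in V_h$ gives
\[
(\bq,\btau_h)_{\T_h}-(u,\nabla\cdot\btau_h)_{\T_h}+\langle u,\btau_h\cdot\bn\rangle_{\partial\T_h}=0.
\]
Since $\nabla\cdot\btau_h\in\Pp_{p-1}$ elementwise, the moment condition on $\Pi u$ lets us replace $u$ by $\Pi u$ in the volume term. Since $\btau_h\cdot\bn|_F\in\Pp_p(F)$, we may replace $u$ by $P_Mu$ on faces. Writing $\bq=\pi\bq-(\pi\bq-\bq)$ then yields \eqref{eq:hdg-general-a} for the projected triple with right-hand side $(\pi\bq-\bq,\btau_h)_{\T_h}$. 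Subtracting the discrete equation (with $Q=0$) gives \eqref{eq:hdg-general-a} for $(\epsq,\epsu,\epsuh)$ with $Q=\pi\bq-\bq$.

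\textbf{Second equation.} Test $\nabla\cdot\bq-\kappa^2u=f$ with $v_h$ and integrate by parts:
\[
-(\bq,\nabla v_h)_{\T_h}-\kappa^2(u,v_h)_{\T_h}+\langle\bq\cdot\bn,v_h\rangle_{\partial\T_h}=(f,v_h).
\]
Since $\nabla v_h\in\Pp_{p-1}^d$, the first moment condition replaces $\bq$ by $\pi\bq$. The key observation is the face condition on the projection: with $\mu=v_h|_F\in\Pp_p(F)$,
\[
\langle\bq\cdot\bn,v_h\rangle_{\partial K}=\langle\pi\bq\cdot\bn+\ii\tau(\Pi u-u),v_h\rangle_{\partial K}.
\]
Because $\tau$ is facewise constant and $v_h|_F\in\Pp_p(F)$, we have $\langle\tau u,v_h\rangle_F=\langle\tau P_Mu,v_h\rangle_F$. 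This gives the projected flux $\pi\bq\cdot\bn+\ii\tau(\Pi u-P_Mu)$. The mass term is written as $-\kappa^2(\Pi u,v_h)+\kappa^2(\Pi u-u,v_h)$. Subtracting the discrete equation yields \eqref{eq:hdg-general-b} with $f=-\kappa^2(\Pi u-u)$, the sign being fixed by moving this term to the right.

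\textbf{Third and fourth equations.} The same face identity shows that $\langle\bq\cdot\bn,\mu_h\rangle$ equals $\langle\widehat{\bm\varepsilon}$-type projected flux$,\mu_h\rangle$ facewise, since the element-side traces $\Pi u$ are tested against $\mu_h\in\Pp_p(F)$. The interior equation \eqref{eq:hdg-general-d} follows from single-valuedness of $\bq\cdot\bn$ across interior faces: its jump vanishes for $\bq\in H(\mathrm{div})$. On the boundary, $-\bq\cdot\bn+\ii\kappa u=g$ tested with $\mu_h$ gives $\langle-\bq\cdot\bn+\ii\kappa P_Mu,\mu_h\rangle=\langle g,\mu_h\rangle$. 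Subtracting the discrete boundary equation cancels $g$, which gives $g=0$ in \eqref{eq:error-data}.

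The main obstacle is careful bookkeeping of signs and conjugations in the complex pairings, and confirming that replacing $u$ by $P_Mu$ inside $\ii\tau(\Pi u-u)$ is legitimate. That replacement relies only on $\tau$ being constant on each face and on the test functions lying in $\Pp_p(F)$. I would verify this step explicitly; the rest is routine.
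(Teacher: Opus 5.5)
Your proposal is correct and follows the paper's proof essentially step for step. Like the paper, you rewrite the exact mixed equations using the moment conditions and the face condition of the HDG projection $(\pi\bq,\Pi u)$, together with the $L^2$ property of $P_M$ and the facewise constancy of $\tau$, and then subtract the discrete equations; the only residuals are $Q=\pi\bq-\bq$ in the flux equation and $f=-\kappa^2(\Pi u-u)$ in the mass term, while $g$ cancels on $\partial\Omega$ and the interior conservation equation follows from the single-valuedness of $\bq\cdot\bn$.
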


\begin{proof}
The exact solution satisfies, for all $(\btau_h,v_h)\in V_h\times W_h$,
\begin{align}
(\bq,\btau_h)_{\T_h}-(u,\nabla\cdot\btau_h)_{\T_h}
+\langle u,\btau_h\cdot\bn\rangle_{\partial\T_h}&=0,\label{eq:exact-first-projected-proof}\\
-(\bq,\nabla v_h)_{\T_h}-\kappa^2(u,v_h)_{\T_h}
+\langle\bq\cdot\bn,v_h\rangle_{\partial\T_h}&=(f,v_h)_{\T_h}.
\label{eq:exact-second-projected-proof}
\end{align}
Using the projection conditions and the definition of $P_M$, \eqref{eq:exact-first-projected-proof} gives
\[
(\pi\bq,\btau_h)_{\T_h}-(\Pi u,\nabla\cdot\btau_h)_{\T_h}
+\langle P_Mu,\btau_h\cdot\bn\rangle_{\partial\T_h}
=(\pi\bq-\bq,\btau_h)_{\T_h}.
\]
Subtracting \eqref{eq:hdg-general-a} with $Q=0$ gives
\[
(\epsq,\btau_h)_{\T_h}-(\epsu,\nabla\cdot\btau_h)_{\T_h}
+\langle\epsuh,\btau_h\cdot\bn\rangle_{\partial\T_h}
=(\pi\bq-\bq,\btau_h)_{\T_h}.
\]
This is the first equation of the auxiliary system for the error, with $Q=\pi\bq-\bq$.

For the second equation, the projection relation on each face gives
\[
\langle(\pi\bq-\bq)\cdot\bn+\ii\tau(\Pi u-u),v_h\rangle_{\partial\T_h}=0,
\]
because $v_h|_{\partial K}$ is a polynomial of degree at most $p$ on each face. Hence \eqref{eq:exact-second-projected-proof} can be written as
\[
-(\pi\bq,\nabla v_h)_{\T_h}
-\kappa^2(\Pi u,v_h)_{\T_h}
+\langle \pi\bq\cdot\bn+\ii\tau(\Pi u-u),v_h\rangle_{\partial\T_h}
=
(f,v_h)_{\T_h}
-\kappa^2(\Pi u-u,v_h)_{\T_h}.
\]
Subtracting the HDG equation \eqref{eq:hdg-general-b} with $Q=0$ gives
\[
-(\epsq,\nabla v_h)_{\T_h}
-\kappa^2(\epsu,v_h)_{\T_h}
+\langle\widehat{\bm\varepsilon}_h\cdot\bn,v_h\rangle_{\partial\T_h}
=
-\kappa^2(\Pi u-u,v_h)_{\T_h},
\]
where we used the definition of $\widehat{\bm\varepsilon}_h\cdot\bn$ and the identity
\[
\langle\tau(P_Mu-u),v_h\rangle_{\partial\T_h}=0.
\]
The latter follows from the facewise constancy of $\tau$ and the $L^2$ projection property of $P_M$.

It remains to verify the boundary and interior flux equations. On $\partial\Omega$, the continuous boundary condition gives
\[
-\bq\cdot\bn+\ii\kappa u=g.
\]
Using the projection relation with test functions on boundary faces and the identity
\[
\langle\tau(P_Mu-u),\mu_h\rangle_{\partial\Omega}=0,
\]
we obtain
\[
\langle-\widehat{\bm\varepsilon}_h\cdot\bn+\ii\kappa\epsuh,\mu_h\rangle_{\partial\Omega}=0
\qquad\forall \mu_h\in M_h.
\]
On interior faces, the exact normal flux is single-valued with opposite signs from the two neighboring elements, while the HDG numerical flux satisfies the conservation equation. Using again the facewise projection relation and $\langle\tau(P_Mu-u),\mu_h\rangle=0$, we get
\[
\langle\widehat{\bm\varepsilon}_h\cdot\bn,\mu_h\rangle_{\partial\T_h\setminus\partial\Omega}=0
\qquad\forall \mu_h\in M_h.
\]
Therefore $(\epsq,\epsu,\epsuh)$ satisfies \eqref{eq:hdg-general} with the data in \eqref{eq:error-data}.
\end{proof}

\begin{theorem}\label{thm:projection-error}
Under the assumptions of \cref{thm:main-stability}, the projected errors satisfy
\begin{align}
&\|\pi\bq-\bq_h\|_{\T_h}^2+
\kappa^2\|\Pi u-u_h\|_{\T_h}^2+
\kappa\|P_Mu-\widehat u_h\|_{\partial\Omega}^2
\notag\\
&\qquad+
\|\tau^{1/2}\big((\Pi u-u_h)-(P_Mu-\widehat u_h)\big)\|_{\partial\T_h}^2
\notag\\
&\hspace{5cm}
\le
C_{\rm st}
\left(
\|\pi\bq-\bq\|_{\T_h}^2+
\kappa^4\|\Pi u-u\|_{\T_h}^2
\right),
\label{eq:projection-error-estimate}
\end{align}
where $C_{\rm st}$ is independent of $h$. The same local uniformity in $\kappa\in[\kappa_0,\kappa_1]$ as in \cref{thm:main-stability} holds.
\end{theorem}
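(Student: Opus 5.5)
The plan is to combine \cref{lem:error-equations} with \cref{thm:main-stability}; no new analytic ingredient should be needed.

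First, by \cref{lem:error-equations}, the triple $(\epsq,\epsu,\epsuh)\in V_h\times W_h\times M_h$ solves the auxiliary system \eqref{eq:hdg-general} with data $Q=\pi\bq-\bq$, $f=-\kappa^2(\Pi u-u)$ and $g=0$. Although $Q$ is not in $V_h$, this is allowed, since the generalized system and the stability theorem only use $Q$ through $(Q,\btau_h)_{\T_h}$, i.e.\ through $P_VQ$. Likewise $f\in L^2(\Omega)$ is admissible. The stabilization $\tau$ is the same as for the discrete scheme, so the hypotheses \eqref{eq:tau-bounds-uniform} and $\kappa\in[\kappa_0,\kappa_1]$ of \cref{thm:main-stability} are inherited unchanged.

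Second, apply \eqref{eq:main-stability} to this triple. The left-hand side of \eqref{eq:main-stability} becomes term by term the left-hand side of \eqref{eq:projection-error-estimate}; in particular the jump term is $\epsu-\epsuh=(\Pi u-u_h)-(P_Mu-\widehat u_h)$. On the right-hand side the data norms are
\[
\|f\|_{\T_h}^2=\kappa^4\|\Pi u-u\|_{\T_h}^2,\qquad \|g\|_{\partial\Omega}^2=0,\qquad \|Q\|_{\T_h}^2=\|\pi\bq-\bq\|_{\T_h}^2.
\]
This gives exactly \eqref{eq:projection-error-estimate} with the same constant $C_{\rm st}$. That constant is independent of $h$ and uniform for $\kappa\in[\kappa_0,\kappa_1]$, as stated in \cref{thm:main-stability}.

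The only point needing care is the applicability of \cref{thm:main-stability} for all $h$. Its coarse-mesh branch invokes \cite[Theorem~3.1]{CuiZhang2014} for the same generalized system with general $(f,g,Q)$, so it applies verbatim to the error triple. Its small-mesh branch, \cref{thm:small-stability}, holds for every solution of \eqref{eq:hdg-general} with arbitrary $L^2$ data. Hence I expect no real obstacle beyond confirming that the data in \eqref{eq:error-data} lie in the admissible classes, which they do for sufficiently smooth $(\bq,u)$.
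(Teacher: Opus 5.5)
Your proposal is correct and follows the paper's proof exactly: apply \cref{thm:main-stability} to the error triple from \cref{lem:error-equations}, whose data are $Q=\pi\bq-\bq$, $f=-\kappa^2(\Pi u-u)$ and $g=0$, and then read off the data norms. Your extra remarks are consistent with how the paper uses these results, namely that $Q$ enters only through $P_VQ$ and that both the small-mesh and coarse-mesh branches hold for arbitrary data.
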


\begin{proof}
Apply \cref{thm:main-stability} to the error equations in Lemma~\ref{lem:error-equations}. Since the error system has data \eqref{eq:error-data}, the right-hand side is
\[
\|\pi\bq-\bq\|_{\T_h}^2+\kappa^4\|\Pi u-u\|_{\T_h}^2.
\]
This gives exactly \eqref{eq:projection-error-estimate}.
\end{proof}

\begin{corollary}\label{cor:projection-defect-error}
Let
\[
\eta_q:=\|\bq-\pi\bq\|_{\T_h},
\qquad
\eta_u:=\|u-\Pi u\|_{\T_h}.
\]
Then
\begin{equation}\label{eq:projection-defect-error}
\|\bq-\bq_h\|_{\T_h}^2+
\kappa^2\|u-u_h\|_{\T_h}^2
\le
C\left[
\eta_q^2+
\kappa^2\eta_u^2+
C_{\rm st}\left(\eta_q^2+\kappa^4\eta_u^2\right)
\right],
\end{equation}
where $C$ is independent of $h$.
\end{corollary}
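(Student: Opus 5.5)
The plan is a triangle-inequality splitting of each error into a projection defect and a discrete projected error, with \cref{thm:projection-error} controlling the latter. Write
\[
\bq-\bq_h=(\bq-\pi\bq)+(\pi\bq-\bq_h),
\qquad
u-u_h=(u-\Pi u)+(\Pi u-u_h).
\]
The elementary inequality $(a+b)^2\le 2a^2+2b^2$, applied in $L^2$, then gives
\[
\|\bq-\bq_h\|_{\T_h}^2+\kappa^2\|u-u_h\|_{\T_h}^2
\le 2\eta_q^2+2\kappa^2\eta_u^2
+2\left(\|\pi\bq-\bq_h\|_{\T_h}^2+\kappa^2\|\Pi u-u_h\|_{\T_h}^2\right).
\]

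The bracketed discrete term is exactly the first two (nonnegative) terms on the left-hand side of \eqref{eq:projection-error-estimate}. Dropping the boundary and stabilization-jump terms there, \cref{thm:projection-error} bounds it by
\[
C_{\rm st}\left(\|\pi\bq-\bq\|_{\T_h}^2+\kappa^4\|\Pi u-u\|_{\T_h}^2\right)
=C_{\rm st}\left(\eta_q^2+\kappa^4\eta_u^2\right).
\]
Substituting this yields \eqref{eq:projection-defect-error} with $C=2$. This constant is independent of $h$, and also of $\kappa$, so the local uniformity of $C_{\rm st}$ in $\kappa\in[\kappa_0,\kappa_1]$ carries over unchanged.

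There is no genuine obstacle here, since all the analytic work sits in \cref{thm:main-stability} and \cref{lem:error-equations}. Two points need checking. First, the hypotheses of \cref{thm:projection-error} must hold: the assumptions of \cref{thm:main-stability} together with enough smoothness of $(\bq,u)$ for the local HDG projection $(\pi\bq,\Pi u)$ to be defined. Second, the norms must match: $\|\cdot\|_{\T_h}$ is the broken $L^2$ norm, which coincides with the $L^2(\Omega)$ norm for the functions involved, so the triangle inequality applies directly. Convergence rates would then follow from the standard approximation properties of the HDG projection of \cite{CockburnGopalakrishnanSayas2010}, but that step lies beyond the statement itself.
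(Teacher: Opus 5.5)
Your proposal is correct and is the same argument as the paper's: split each error with the triangle inequality into the projection defect and the projected error, then bound the projected error by \cref{thm:projection-error} after dropping the nonnegative boundary and jump terms. Your explicit constant $C=2$ agrees with the paper's unspecified $h$-independent constant.
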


\begin{proof}
The triangle inequalities
\[
\|\bq-\bq_h\|_{\T_h}\le \eta_q+\|\pi\bq-\bq_h\|_{\T_h},
\qquad
\|u-u_h\|_{\T_h}\le \eta_u+\|\Pi u-u_h\|_{\T_h}
\]
combined with \cref{thm:projection-error} give the estimate.
\end{proof}

The projection-defect estimate above is the main error consequence of the mesh-uniform stability theorem. Explicit error bounds in the notation of \cite[Corollary~3.2]{CuiZhang2014} follow by combining Corollary \ref{cor:projection-defect-error} with the corresponding local HDG projection approximation estimates. We record this consequence in Appendix~\ref{app:explicit-CZ-error}, where the hypotheses are stated explicitly and the algebraic proof is given.

\begin{remark}\label{rem:error-meaning}
Theorem~\ref{thm:projection-error} has the same algebraic role as Theorem~3.3 of \cite{CuiZhang2014}: it applies a generalized stability estimate to the projected error equations. The explicit Cui--Zhang-type formulas in Appendix~\ref{app:explicit-CZ-error} correspond to \cite[Corollary~3.2]{CuiZhang2014}. The improvement is the replacement of the mesh-singular stability factor by a constant independent of $h$. The constants are locally uniform for $\kappa$ in a prescribed compact subinterval of $(0,\infty)$ and may depend on the fixed polynomial degree $p$.
\end{remark}

\section{Numerical examples}\label{sec:numerics}

This section reports numerical experiments for the HDG method
\eqref{eq:hdg-general}--\eqref{eq:numerical-flux}. The purpose of the experiments is to test the main assertion of the analysis: for fixed polynomial degree and for
stabilizations bounded above and below independently of the mesh size, the
observable discrete stability does not exhibit the negative powers of $h$ appearing
in the explicit stability factor \eqref{eq:intro-CZ-constant} given in \cite{CuiZhang2014}.

The code used for all numerical examples presented in this paper is available on GitHub at
\url{https://github.com/yyue24/HDG-Helmholtz-numerics}. All computations were performed with complex arithmetic in double precision. The
global linear systems were statically condensed to the skeletal trace unknowns and
then solved by a sparse direct solver. In two dimensions we use the uniform
triangulation of $\Omega=(0,1)^2$ obtained by dividing each square cell into two
triangles; in three dimensions we use the analogous uniform tetrahedralization of
$\Omega=(0,1)^3$. In all tables below, $h_N=1/N$ denotes the mesh parameter. The
maximal simplex diameter differs from $h_N$ only by the fixed factors $\sqrt2$ in
two dimensions and $\sqrt3$ in three dimensions.

We report three residual diagnostics. The algebraic residual is
\[
\frac{\|Ax-b\|_2}{\|b\|_2},
\]
for the condensed linear system. The internal flux residual is the weak face
residual
\[
C_{\rm int}^2
=
\sum_{F\in\E_h^i} (r_F^{\rm int})^*M_F^{-1}r_F^{\rm int},
\qquad
r_F^{\rm int}(\mu_h)
=
\left\langle
\widehat{\bq}_h^+\cdot\bn^+
+
\widehat{\bq}_h^-\cdot\bn^-,
\mu_h
\right\rangle_F,
\]
where $M_F$ is the face mass matrix. The boundary residual is the weak Robin
residual
\[
C_{\rm bdry}^2
=
\sum_{F\subset\partial\Omega} (r_F^{\rm bdry})^*M_F^{-1}r_F^{\rm bdry},
\qquad
r_F^{\rm bdry}(\mu_h)
=
\left\langle
-\widehat{\bq}_h\cdot\bn+\ii\kappa\widehat u_h-g,
\mu_h
\right\rangle_F.
\]
The residuals reported below are weak residuals in the discrete test space. For
non-polynomial data, such as plane waves, pointwise boundary residuals are not used
as a pass--fail criterion.

For the stability experiments we use the following discrete energy quantity, which we refer to below as the energy norm:
\begin{equation}\label{eq:num-energy-ratio}
E_h^2
=
\|\bq_h\|_{\T_h}^2
+
\kappa^2\|u_h\|_{\T_h}^2
+
\kappa\|\widehat u_h\|_{\partial\Omega}^2
+
\|\tau^{1/2}(u_h-\widehat u_h)\|_{\partial\T_h}^2
\end{equation}
and
\[
D_h^2
=
\|f_h\|_{\T_h}^2+\|g_h\|_{\partial\Omega}^2+\|Q_h\|_{\T_h}^2,
\qquad
R_h=\frac{E_h}{D_h}.
\]
The random right-hand sides used in the stability tests are mass-whitened complex
Gaussian samples and then normalized so that $D_h=1$. This avoids a sampling bias
caused by the scaling of the polynomial basis.

To compare the measured stability ratio with the old explicit stability factor, we
plot the relative Cui--Zhang reference curve
\begin{equation}\label{eq:num-relative-CZ}
B_{\rm CZ}^{\rm rel}(h)
=
R_{h_0}^{\max}
\left(
\frac{
C_{\rm CZ}(h,\kappa,\tau)
}{
C_{\rm CZ}(h_0,\kappa,\tau)
}
\right)^{1/2},
\end{equation}
where $h_0$ is the coarsest mesh in the test and $C_{\rm CZ}$ is the expression in
\eqref{eq:intro-CZ-constant}. Thus \eqref{eq:num-relative-CZ} compares only the
mesh-dependent growth rate and does not assume that the hidden constants in the
old bound are sharp.

\subsection{Plane-wave convergence}\label{sec:num-plane-wave}

We first verify the implementation using the exact two-dimensional plane wave
\[u(x)=\exp(\ii\kappa d\cdot x),
\qquad
d=\frac{(1,2)}{\sqrt5},
\qquad
\kappa=5.
\]
Then $f=0$,
\[
\bq=-\nabla u=-\ii\kappa d\,u,
\qquad
g=\partial_{\bn}u+\ii\kappa u
=
\ii\kappa(d\cdot\bn+1)u.
\]
We test polynomial degrees $p=0,1,2$ and two stabilizations, $\tau=1$ and
$\tau=\kappa$. All element and face integrals are evaluated with quadrature order
\[
q_{\rm quad}(p)=2p+12.
\]
An additional quadrature audit with order $q_{\rm quad}(p)+4$ changes the main
errors by less than $5\times10^{-3}$ in relative value.

\Cref{tab:num-exp1-final} reports the finest-grid errors and the final observed
orders. The observed rates are approximately $p+1$ for both $u_h$ and $\bq_h$.
The largest algebraic residual over all rows of this experiment is
$1.58\times10^{-11}$, the largest weak internal flux residual is
$1.00\times10^{-10}$, and the largest weak Robin residual is
$1.67\times10^{-11}$.

\begin{table}[tbp]
\centering
\caption{Two-dimensional plane-wave convergence at the finest grid $N=128$,
$\kappa=5$. The rates are the observed rates between $N=64$ and $N=128$.}
\label{tab:num-exp1-final}
\begin{tabular}{cccccc}
\toprule
$p$ & $\tau$ &
$\|u-u_h\|_{\T_h}$ & rate &
$\|\bq-\bq_h\|_{\T_h}$ & rate \\
\midrule
0 & $1$        & $4.83\times10^{-2}$ & 0.956 & $2.00\times10^{-1}$ & 0.957 \\
0 & $\kappa$  & $2.13\times10^{-2}$ & 0.980 & $1.14\times10^{-1}$ & 0.980 \\
1 & $1$        & $1.05\times10^{-4}$ & 2.000 & $1.75\times10^{-4}$ & 2.002 \\
1 & $\kappa$  & $3.38\times10^{-5}$ & 2.001 & $2.33\times10^{-4}$ & 2.001 \\
2 & $1$        & $2.61\times10^{-7}$ & 3.000 & $4.39\times10^{-7}$ & 3.000 \\
2 & $\kappa$  & $8.52\times10^{-8}$ & 3.000 & $5.85\times10^{-7}$ & 3.000 \\
\bottomrule
\end{tabular}
\end{table}

\subsection{Mesh-uniform stability}\label{sec:num-mesh-uniform}

The central numerical test concerns the empirical mesh-uniform stability ratio
$R_h$ in \eqref{eq:num-energy-ratio}. We set $\kappa=5$, $\tau=1$, and test
$p=0,1,2$ on meshes $N=8,16,32,64,128$. For each $(p,N)$ we generate 20 independent
mass-whitened complex random right-hand sides with $Q_h$, $f_h$, and $g_h$ all
present and normalized so that $D_h=1$. We record the maximum and median values
of $R_h$.

The results are shown in \Cref{fig:num-stability-cz} and summarized in
\Cref{tab:num-exp2-summary}. In all three polynomial degrees, $R_h^{\max}$ remains
bounded as the mesh is refined. In fact, it decreases or remains essentially flat.
By contrast, the relative Cui--Zhang reference curve $B_{\rm CZ}^{\rm rel}$ grows by
roughly four orders of magnitude on the same meshes. For example, for $p=1$,
$R_h^{\max}$ changes from $0.545$ at $N=8$ to $0.472$ at $N=128$, while
$B_{\rm CZ}^{\rm rel}$ changes from $0.545$ to $2078$.

\begin{figure}[tbp]
\centering
\includegraphics[width=.74\linewidth]{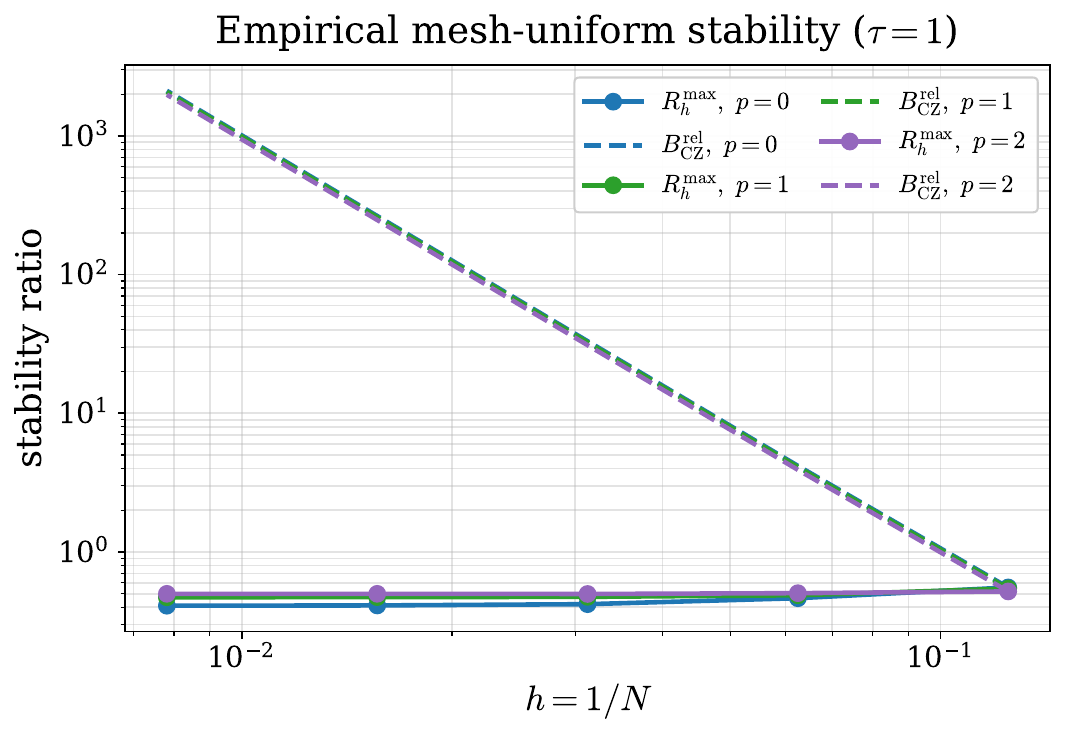}
\caption{Empirical mesh-uniform stability ratio $R_h^{\max}$
for the generalized HDG system, compared with the relative Cui--Zhang reference
curve \eqref{eq:num-relative-CZ}. The computed stability ratios stay bounded,
whereas the old reference curve reflects the $h^{-3}$ growth in
\eqref{eq:intro-CZ-constant}.}
\label{fig:num-stability-cz}
\end{figure}

\begin{table}[tbp]
\centering
\caption{Mesh-uniform stability test for $\kappa=5$, $\tau=1$, and 20
mass-whitened random right-hand sides per mesh.}
\label{tab:num-exp2-summary}
\begin{tabular}{ccccc}
\toprule
$p$ & $R_{\max}(N=8)$ & $R_{\max}(N=128)$ &
slope of $\log R_{\max}$ vs.\ $\log h$ &
$B_{\rm CZ}^{\rm rel}(N=128)$ \\
\midrule
0 & 0.555 & 0.410 & 0.105 & 2117 \\
1 & 0.545 & 0.472 & 0.046 & 2078 \\
2 & 0.520 & 0.500 & 0.013 & 1982 \\
\bottomrule
\end{tabular}
\end{table}

These results are consistent with the mesh-uniform estimate proved in
\Cref{sec:stability}. They also indicate that the negative powers of $h$ in
\eqref{eq:intro-CZ-constant} are not observed in the actual discrete resolvent
measured by \eqref{eq:num-energy-ratio}.

\subsection{The generalized system and the \texorpdfstring{$Q$}{Q}-forcing term}\label{sec:num-q-forcing}

The $Q$ term in \eqref{eq:hdg-general-a} is not present in the physical Helmholtz
solve, but it is essential in the projected error equations. We therefore test the
generalized system in three regimes:
\[
\text{all: } Q_h,f_h,g_h\ne0,
\qquad
\text{fg\_only: } Q_h=0,\ f_h,g_h\ne0,
\qquad
\text{q\_only: } Q_h\ne0,\ f_h=g_h=0.
\]
As before, every sample is mass-whitened and normalized by $D_h=1$. We use
$\kappa=5$, $\tau=1$, $p=0,1,2$, $N=8,16,32,64,128$, and 20 samples per parameter
set.

\Cref{fig:num-qforcing-cz-all,fig:num-qforcing-cz-fg-only,fig:num-qforcing-cz-q-only}
and \Cref{tab:num-exp3-summary} show that the empirical stability ratio remains
bounded under mesh refinement in all three forcing configurations. In particular,
the isolated $Q$-forcing case stays uniformly bounded, which is important because
the proof of the error estimate in \Cref{sec:error} uses the generalized stability
estimate with $Q=\pi\bq-\bq$.

\begin{figure}[tbp]
\centering
\includegraphics[width=.74\linewidth]{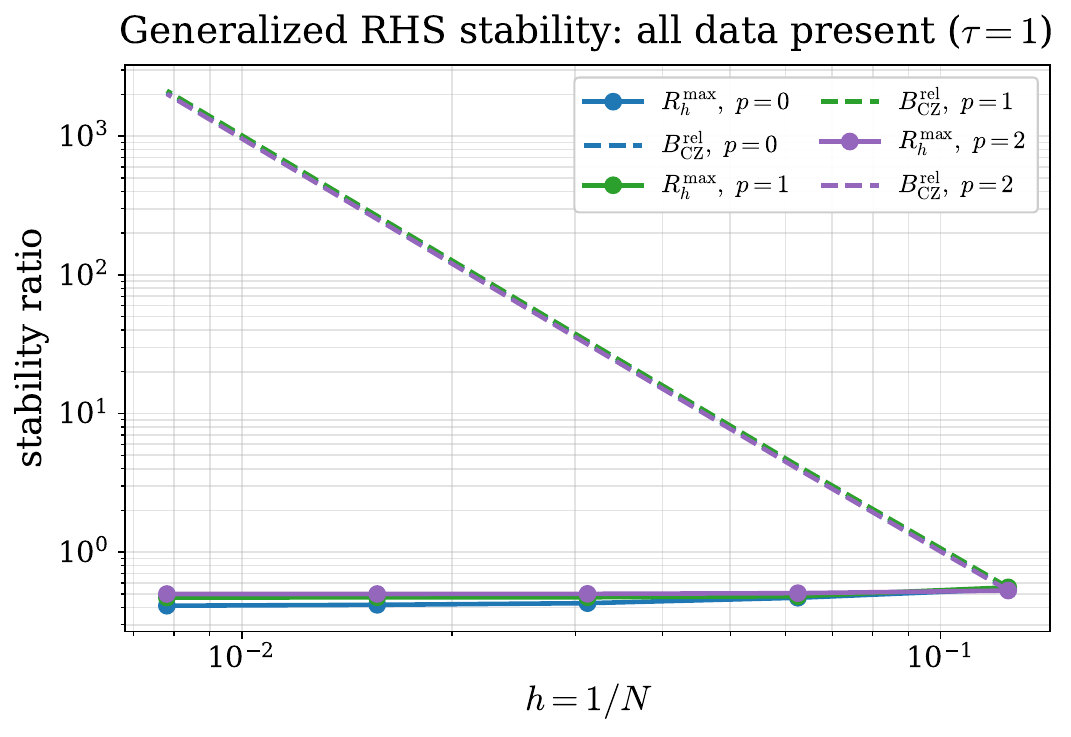}
\caption{Generalized stability test with all data present. The empirical stability ratio $R_h^{\max}$ remains uniformly bounded for $p=0,1,2$ as the mesh is refined, while the relative Cui--Zhang reference curve $B_{\rm CZ}^{\rm rel}$ exhibits the mesh-dependent growth predicted by the old explicit bound.}
\label{fig:num-qforcing-cz-all}
\end{figure}

\begin{figure}[tbp]
\centering
\includegraphics[width=.74\linewidth]{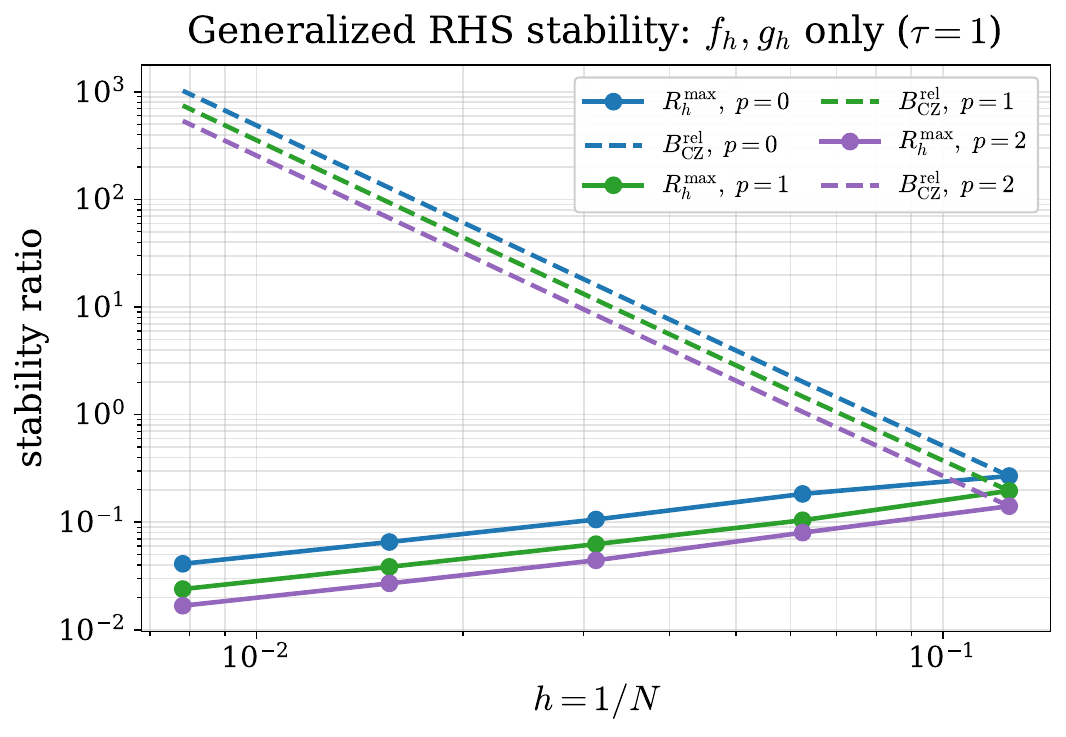}
\caption{Generalized stability test with only $f_h$ and $g_h$ present. In this case the measured stability ratio decreases under mesh refinement, whereas the relative Cui--Zhang reference curve still grows rapidly.}
\label{fig:num-qforcing-cz-fg-only}
\end{figure}

\begin{figure}[tbp]
\centering
\includegraphics[width=.74\linewidth]{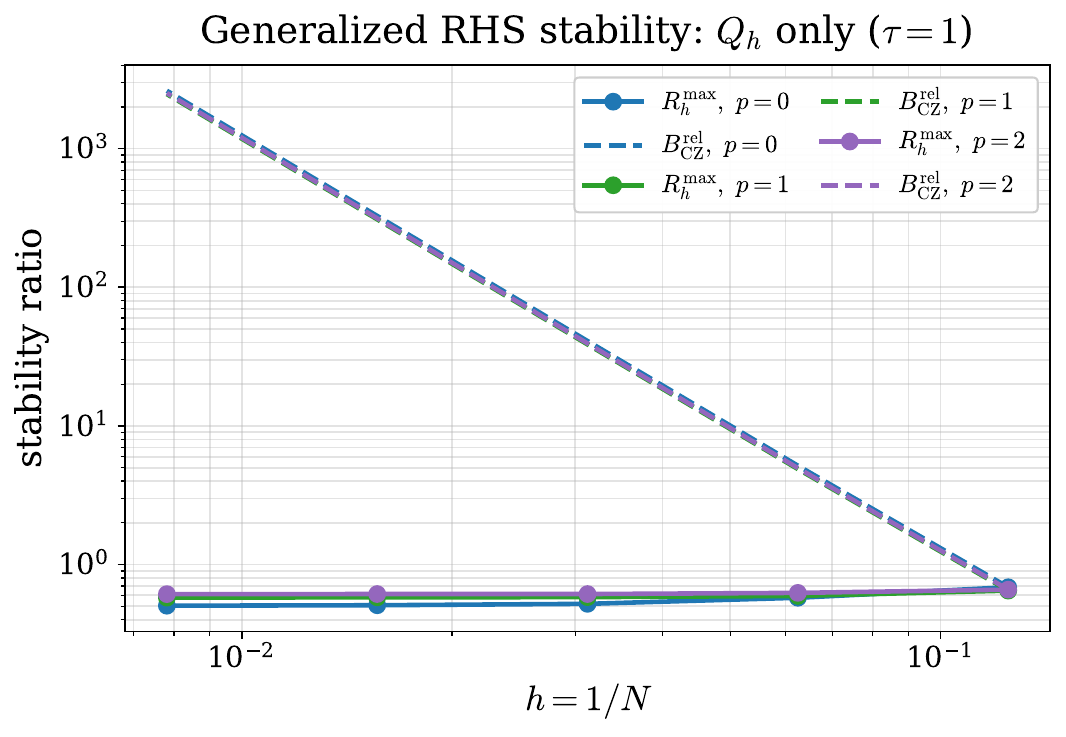}
\caption{Generalized stability test with only $Q_h$ present. The $Q$-only case remains uniformly bounded under mesh refinement, confirming that the auxiliary term used in the error equations does not introduce an observable mesh-size blow-up.}
\label{fig:num-qforcing-cz-q-only}
\end{figure}

\begin{table}[tbp]
\centering
\caption{Generalized stability test for the three forcing regimes. The table
reports $R_{\max}$ at the coarsest and finest meshes and the slope of
$\log R_{\max}$ against $\log h$.}
\label{tab:num-exp3-summary}
\begin{tabular}{ccccc}
\toprule
case & $p$ & $R_{\max}(N=8)$ & $R_{\max}(N=128)$ & slope \\
\midrule
all      & 0 & 0.542 & 0.411 & 0.096 \\
all      & 1 & 0.557 & 0.470 & 0.052 \\
all      & 2 & 0.529 & 0.499 & 0.018 \\
fg\_only & 0 & 0.269 & 0.041 & 0.691 \\
fg\_only & 1 & 0.196 & 0.024 & 0.751 \\
fg\_only & 2 & 0.141 & 0.017 & 0.771 \\
q\_only  & 0 & 0.686 & 0.505 & 0.106 \\
q\_only  & 1 & 0.649 & 0.577 & 0.037 \\
q\_only  & 2 & 0.659 & 0.612 & 0.024 \\
\bottomrule
\end{tabular}
\end{table}

\subsection{Wave-number and stabilization dependence}\label{sec:num-kappa}

The theory in this paper gives local uniformity on every compact interval
$0<\kappa_0\le\kappa\le\kappa_1<\infty$. It does not assert a high-frequency
uniform estimate as $\kappa\to\infty$. To illustrate this distinction, we run a
wave-number sweep with
\[
\kappa\in\{1,2,4,8,16,32\},
\qquad
\tau\in\{1,\kappa,0.1\kappa,10\kappa\}.
\]
We use the same exact plane-wave family $u_\kappa(x)=\exp(\ii\kappa d\cdot x)$ with $d=(1,2)/\sqrt5$ and set $f$ and $g$ accordingly. We test both a fixed mesh $N=128$ and a fixed-resolution rule based on points per wavelength. Since the wavelength is $\lambda=2\pi/\kappa$ and $h=1/N$, the number of grid intervals per wavelength is $\lambda/h=2\pi N/\kappa$. In the fixed-resolution runs, we use the representative choice of at least $12$ grid intervals per wavelength, choosing $N$ as the smallest dyadic integer with $N\ge8$ and $2\pi N/\kappa\ge12$.

The error behavior reflects the usual resolution dependence of Helmholtz
discretizations. On the fixed mesh with $\tau=1$, the $L^2$ error in $u$ increases
from approximately $1.1\times10^{-6}$ at $\kappa=1$ to
$2.48\times10^{-2}$ at $\kappa=32$. Under the fixed-points-per-wavelength rule,
the high-frequency error is still visible. However, the stabilization choice matters:
at $\kappa=32$, $\tau=\kappa$ reduces the scalar error from
$8.71\times10^{-2}$ for $\tau=1$ to $9.02\times10^{-3}$.

\begin{table}[tbp]
\centering
\caption{Representative wave-number sweep results at $\kappa=32$ under the
fixed-points-per-wavelength rule. Here $N=64$ and the number of points per
wavelength is approximately $12.57$. The last column is the maximum random
stability ratio over 10 mass-whitened random samples.}
\label{tab:num-kappa32}
\begin{tabular}{cccc}
\toprule
$\tau$ & $\|u-u_h\|_{\T_h}$ & $\|\bq-\bq_h\|_{\T_h}$ & random $R_{\max}$ \\
\midrule
$1$          & $8.71\times10^{-2}$ & $1.78$ & 0.582 \\
$\kappa$    & $9.02\times10^{-3}$ & $3.33\times10^{-1}$ & 0.610 \\
$0.1\kappa$ & $4.04\times10^{-2}$ & $8.44\times10^{-1}$ & 0.598 \\
$10\kappa$  & $3.77\times10^{-2}$ & $1.82$ & 0.786 \\
\bottomrule
\end{tabular}
\end{table}

Thus the experiments support the local-in-$\kappa$ interpretation of the theorem
but should not be read as evidence for a pollution-free or high-frequency uniform
method. Additional data for this wave-number sweep are reported in
\Cref{app:num-kappa}.

\subsection{Nonsmooth \texorpdfstring{$L^2$}{L2} forcing}\label{sec:num-rough}

To test the minimal-data narrative, we solve a problem with rough $L^2$ forcing on
$\Omega=(0,1)^2$:
\[
f=\chi_{D_1}+\ii\chi_{D_2},
\qquad
g=0,
\]
where
\[
D_1=[1/4,3/4]\times[1/4,3/4],
\qquad
D_2=[1/8,5/8]\times[3/8,7/8].
\]
The rectangles are aligned with the dyadic meshes used in the experiment. Since no
closed-form exact solution is used, we compare against a reference solution computed
with $N_{\rm ref}=256$ and $p_{\rm ref}=2$. The reference errors decrease
monotonically: for $p=1$, the scalar reference error decreases from
$1.55\times10^{-3}$ at $N=8$ to $5.97\times10^{-6}$ at $N=128$, with observed
order about $2$. The energy norm remains bounded, approximately between $0.25$
and $0.33$. Detailed results for this nonsmooth forcing test are given in
\Cref{app:num-rough}.

\subsection{Three-dimensional verification}\label{sec:num-3d}

We also verify the three-dimensional implementation using
\[
\Omega=(0,1)^3,
\qquad
u(x,y,z)=\exp\!\left(\ii\kappa\frac{x+y+z}{\sqrt3}\right),
\qquad
\kappa=3.
\]
For $p=1$, the observed scalar convergence orders on the final mesh refinement are
$2.004$ for $\tau=1$ and $2.016$ for $\tau=\kappa$. The corresponding flux orders
are $2.011$ and $2.031$. Detailed results for the three-dimensional experiment are
given in \Cref{app:num-3d}.

Overall, the numerical evidence is consistent with the conclusions of the paper:
the observed HDG stability ratio is mesh-uniform for fixed $p$ and fixed finite
wave-number ranges, while the old explicit Cui--Zhang reference curve displays the
negative mesh powers inherited from the previous proof technique.

\section{Concluding remarks}

We have proved an $h$-uniform generalized stability estimate for the HDG Helmholtz method analyzed in \cite{CuiZhang2014}, within the same geometric and mesh framework, for fixed polynomial degree and uniformly bounded stabilization parameters. The proof replaces the Rellich-inverse-estimate route by an HDG compactness argument based on the discrete distributional gradient and the Crouzeix--Raviart lifting mechanism. Applying the generalized estimate to the projection error equations removes the negative powers of $h$ from the stability factor in the error analysis. The numerical experiments in \Cref{sec:numerics} support the same conclusion: the measured stability ratios remain bounded under mesh refinement, while the old relative Cui--Zhang reference curve exhibits the mesh-singular growth predicted by the previous explicit bound. Thus, for the HDG setting considered here, the mesh-size blow-up in the previous stability and error bounds is not an intrinsic feature of the discrete Helmholtz operator.

The compactness input used in this paper is a fixed-degree HDG compactness result. A simultaneous $h$- and $p$-uniform stability estimate would require a $p$-robust compactness theorem for the HDG discrete distributional gradient and the Crouzeix--Raviart lifting mechanism, avoiding polynomial-degree dependent inverse or trace constants. Such a result is not proved here. Another important open point is to derive a usable explicit dependence of the stability constant on $\kappa$ as $\kappa\to\infty$; the present theorem is only locally uniform on prescribed compact wave-number intervals. Establishing these two refinements remains a natural direction for future work.

\appendix

\section{The CR lifting mechanism behind compactness}\label{app:compactness}

This appendix gives the details behind the compactness statement used in \cref{thm:HDG-compactness}. The argument is a specialized form of the HDG compactness framework of \cite{JiangWalkingtonYue2025}; it is included to make clear how the Crouzeix--Raviart lifting interacts with the HDG discrete distributional gradient.

Let $\widehat w_h\in M_h$ and define its face average by
\[
\overline{\widehat w_h}|_F=\frac1{|F|}\int_F\widehat w_h\,ds.
\]
The Crouzeix--Raviart lifting $\Lh\widehat w_h\in\CR(\T_h)$ is the function whose face degrees of freedom are these averages. Equivalently, $\Lh\widehat w_h$ is the CR lifting of the facewise constant function $\overline{\widehat w_h}$, not of the full polynomial trace $\widehat w_h$.

The first key estimate is the projection property
\begin{equation}\label{eq:appendix-gradient-projection}
\|\nabla_h\Lh\widehat w_h\|_{L^2(\Omega)}
\le
\|\Gh(w_h,\widehat w_h)\|_{L^2(\Omega)}.
\end{equation}
Indeed, if $\br_0$ is piecewise constant, then $\nabla\cdot\br_0=0$ on each element, and the definition of $\Gh$ gives
\[
(\Gh(w_h,\widehat w_h),\br_0)_{\T_h}
=(\nabla_h w_h,\br_0)_{\T_h}
+\langle \widehat w_h-w_h,\br_0\cdot\bn\rangle_{\partial\T_h}
=\langle \widehat w_h,\br_0\cdot\bn\rangle_{\partial\T_h}.
\]
Since $\br_0\cdot\bn$ is constant on each face, replacing $\widehat w_h$ by its face average does not change the last pairing. Integration by parts for the CR function gives
\[
\langle \widehat w_h,\br_0\cdot\bn\rangle_{\partial\T_h}
=
\langle \Lh\widehat w_h,\br_0\cdot\bn\rangle_{\partial\T_h}
=(\nabla_h\Lh\widehat w_h,\br_0)_{\T_h}.
\]
Thus $\nabla_h\Lh\widehat w_h$ is the $L^2$ projection of $\Gh(w_h,\widehat w_h)$ onto the piecewise constant vector fields, which proves \eqref{eq:appendix-gradient-projection}.

The second key estimate is the local difference bound
\begin{equation}\label{eq:appendix-difference}
\|w_h-\Lh\widehat w_h\|_{L^2(K)}
\le
C\bigl(h_K\|\Gh(w_h,\widehat w_h)\|_{L^2(K)}+h_K^{1/2}\|w_h-\widehat w_h\|_{L^2(\partial K)}\bigr).
\end{equation}
For completeness we recall the proof. Testing the definition of $\Gh$ on $K$ with $\Gh(w_h,\widehat w_h)-\nabla w_h$ gives, using the inverse trace inequality,
\[
h_K\|\Gh(w_h,\widehat w_h)-\nabla w_h\|_{L^2(K)}^2
\le
C\|w_h-\widehat w_h\|_{L^2(\partial K)}^2.
\]
Hence
\[
h_K\|\nabla w_h\|_{L^2(K)}^2
\le
C\left(h_K\|\Gh(w_h,\widehat w_h)\|_{L^2(K)}^2+
\|w_h-\widehat w_h\|_{L^2(\partial K)}^2\right).
\]
Together with \eqref{eq:appendix-gradient-projection}, a local Poincar\'e inequality around the boundary average of $w_h-\Lh\widehat w_h$ yields the derivative part of \eqref{eq:appendix-difference}. The boundary-average part is controlled by
\[
\left\|(w_h-\Lh\widehat w_h)_{\partial K}\right\|_{L^2(K)}
\le
C h_K^{1/2}\|w_h-\widehat w_h\|_{L^2(\partial K)},
\]
because the face averages of $\Lh\widehat w_h$ agree with the face averages of $\widehat w_h$. Combining these two bounds gives \eqref{eq:appendix-difference}.

Now assume \eqref{eq:compactness-bound} and let $h\to0$. Summing \eqref{eq:appendix-difference} over all elements gives
\[
\|w_h-\Lh\widehat w_h\|_{L^2(\Omega)}\to0,
\]
while \eqref{eq:appendix-gradient-projection} gives a uniform broken $H^1$ bound for the CR sequence $\Lh\widehat w_h$. The compactness of the CR space therefore gives a subsequence and a function $w\in H^1(\Omega)$ such that
\[
\Lh\widehat w_h\to w\quad\text{strongly in }L^2(\Omega),
\qquad
\nabla_h\Lh\widehat w_h\rightharpoonup\nabla w\quad\text{weakly in }L^2(\Omega)^d.
\]
The vanishing difference above then implies $w_h\to w$ strongly in $L^2(\Omega)$. To identify the weak limit of $\Gh(w_h,\widehat w_h)$, test the defining identity of $\Gh$ against smooth vector fields projected into $V_h$ and pass to the limit; the volume convergence of $w_h$ and the cancellation of interior face contributions show that the weak limit is $\nabla w$. The same argument applied to test fields with nonzero normal trace on the boundary identifies the weak boundary limit of $\widehat w_h|_{\partial\Omega}$ with $w|_{\partial\Omega}$. This proves the compactness statement used in \cref{thm:HDG-compactness}.

\section{Zero Cauchy data uniqueness}\label{app:zero-cauchy}

\begin{lemma}\label{lem:zero-cauchy}
Let $\Omega\subset\R^d$, $d=2,3$, be a connected bounded Lipschitz polyhedral domain. Suppose $u\in H^1(\Omega)$ satisfies
\[
-\Delta u-\kappa^2u=0\quad\text{in }\Omega,
\qquad
u|_{\partial\Omega}=0,
\qquad
\partial_{\bn}u|_{\partial\Omega}=0
\]
in the trace and weak normal-trace sense. Then $u=0$ in $\Omega$.
\end{lemma}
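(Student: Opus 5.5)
The plan is to extend $u$ by zero to a neighborhood of $\overline\Omega$ and then invoke unique continuation for the Helmholtz operator. Choose a ball $B\supset\overline\Omega$ and define $\tilde u=u$ in $\Omega$ and $\tilde u=0$ in $B\setminus\overline\Omega$. The first step is to check that $\tilde u\in H^1(B)$. This holds because the trace of $u$ on $\partial\Omega$ vanishes, and on a Lipschitz domain the zero extension of an $H^1$ function with zero trace lies in $H^1$, with $\nabla\tilde u=\widetilde{\nabla u}$. Equivalently, $u\in H^1_0(\Omega)$, since Lipschitz domains admit the characterization of $H^1_0$ as the kernel of the trace.

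The second step is to show that $-\Delta\tilde u-\kappa^2\tilde u=0$ in $\mathcal D'(B)$. Take $\varphi\in C_c^\infty(B)$. Then
\[
\int_B\nabla\tilde u\cdot\nabla\overline\varphi-\kappa^2\tilde u\overline\varphi\,dx
=(\nabla u,\nabla\varphi)_\Omega-\kappa^2(u,\varphi)_\Omega .
\]
Since $\varphi|_\Omega\in H^1(\Omega)$ and $\Delta u=-\kappa^2u\in L^2(\Omega)$, the definition of the weak normal trace gives
\[
(\nabla u,\nabla\varphi)_\Omega-\kappa^2(u,\varphi)_\Omega=\langle\partial_{\bn}u,\gamma\varphi\rangle_{\partial\Omega}.
\]
This quantity is $0$ by hypothesis, with the same convention used in the proof of \cref{thm:small-stability}. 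The point is that the test function $\varphi$ does not vanish on $\partial\Omega$, and the vanishing normal trace is exactly what removes the boundary term.

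The third step uses interior elliptic regularity. Since $\Delta\tilde u=-\kappa^2\tilde u$ in $B$, a bootstrap shows that $\tilde u$ is real-analytic in $B$; for instance, $\tilde u$ is the sum of a Bessel-type solution and a harmonic function, or one may use Weyl's lemma for $\Delta+\kappa^2$. The set $B\setminus\overline\Omega$ is nonempty and open, and $\tilde u$ vanishes there. Because $B$ is connected, analytic continuation (equivalently, the weak unique continuation property for $\Delta+\kappa^2$) gives $\tilde u\equiv0$ in $B$, and therefore $u=0$ in $\Omega$. Connectedness of $\Omega$ is not needed for this step, because the connected set used is $B$.

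The main obstacle is the rigor of the first two steps, namely the zero-extension characterization of $H^1_0$ on Lipschitz domains and the precise meaning of the weak normal trace pairing for a test function $\gamma\varphi$ with $\varphi\in H^1$. Both are standard: the first is in Grisvard or Ne\v{c}as, and the second is built directly into the definition used in the paper. Once they are in place, analyticity closes the argument.
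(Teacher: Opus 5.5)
Your proof is correct. It follows the paper's basic strategy: extend $u$ by zero, check the Helmholtz equation in $\mathcal D'$, and conclude by analyticity and unique continuation. The difference is that you extend globally and the paper extends locally.

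\textbf{The paper's route.} It works near a single boundary patch. It picks a small ball centered on a relatively open piece $F$ of one flat face, extends $u$ by zero across $F$, and verifies the distributional equation by Green's formula on the half-ball. There the Dirichlet datum appears as an explicit boundary term $\langle\gamma u,\partial_{\bn}\phi\rangle_F$ and the Neumann datum as $\langle\partial_{\bn}u,\gamma\phi\rangle_F$. Analyticity kills $u$ on the half-ball. A second unique-continuation step inside $\Omega$ then spreads this to all of $\Omega$, and that step is where connectedness of $\Omega$ is used; the flat face is needed earlier, for the half-ball geometry.

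\textbf{Your route.} You extend to a ball $B\supset\overline\Omega$ in three moves.
\begin{enumerate}
\item The Dirichlet condition enters through $\ker\gamma=H^1_0(\Omega)$ on Lipschitz domains. Hence $\tilde u\in H^1(B)$, and no Dirichlet boundary term ever appears.
\item The Neumann condition enters exactly as the definition of the weak normal trace, tested with $\varphi|_\Omega\in H^1(\Omega)$.
\item Unique continuation is run once, in the connected set $B$.
\end{enumerate}

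\textbf{Comparison.} Your version is a bit more economical and more general. It needs neither a flat face nor connectedness of $\Omega$, and it does not localize traces to $F$. The price is citing the $H^1_0$-trace characterization (Grisvard, Ne\v{c}as). The paper's version is genuinely local: it uses only the Cauchy data on the patch $F$. That is the form one would need if the data vanished on only part of $\partial\Omega$; a global zero extension would then produce jump terms on the rest of the boundary.
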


\begin{proof}
Since $\Delta u=-\kappa^2u\in L^2(\Omega)$, we have
\[
u\in H_\Delta(\Omega):=\{w\in H^1(\Omega):\Delta w\in L^2(\Omega)\}.
\]
For functions in $H_\Delta(\Omega)$ the weak normal trace is defined by Green's identity:
\[
\langle\partial_{\bn}u,\gamma\varphi\rangle_{\partial\Omega}
=
(\nabla u,\nabla\varphi)_\Omega+(\Delta u,\varphi)_\Omega
\qquad\forall \varphi\in H^1(\Omega),
\]
where $\gamma:H^1(\Omega)\to H^{1/2}(\partial\Omega)$ is the trace operator.

Choose a relatively open subset $F$ of one flat boundary face of the polyhedral domain. After a rigid change of coordinates, assume that the supporting hyperplane of $F$ is $\{x_d=0\}$ and that, in a ball $B$ centered at a point of $F$, the domain lies in $B^+:=B\cap\{x_d>0\}$. Let $B^-:=B\cap\{x_d<0\}$. Define the zero extension
\[
\widetilde u(x)=
\begin{cases}
u(x),&x\in B^+,\\
0,&x\in B^-.
\end{cases}
\]
We claim that
\[
-\Delta\widetilde u-\kappa^2\widetilde u=0
\qquad\text{in }\mathcal D'(B).
\]
Let $\phi\in C_c^\infty(B)$. Applying Green's formula on $B^+$ gives
\[
\langle-\Delta\widetilde u-\kappa^2\widetilde u,\phi\rangle_B
=
\langle\partial_{\bn}u,\gamma\phi\rangle_F
-\langle\gamma u,\partial_{\bn}\phi\rangle_F,
\]
where the normal is the exterior normal of $B^+$ on the flat interface $F$. The first term is zero by the assumed vanishing weak normal trace, and the second term is zero by the assumed vanishing Dirichlet trace. Therefore the zero extension satisfies the Helmholtz equation in the whole ball $B$ in the sense of distributions.

By interior elliptic regularity for the constant-coefficient operator $-\Delta-\kappa^2$, the distributional solution $\widetilde u$ is smooth, indeed real analytic, in $B$. Since $\widetilde u$ is identically zero in the nonempty open set $B^-$, analyticity implies $\widetilde u=0$ in $B$. Hence $u$ vanishes in the nonempty open set $B^+\subset\Omega$. Applying the same unique-continuation principle inside the connected domain $\Omega$ gives $u=0$ throughout $\Omega$.
\end{proof}

\section{Explicit Cui--Zhang-type error bounds}\label{app:explicit-CZ-error}

This appendix records the algebra that turns the projection-defect estimate into explicit error bounds of the same form as \cite[Corollary~3.2]{CuiZhang2014}. The local projection approximation estimates are stated as assumptions in the form needed for the calculation below.

Let
\[
\eta_q:=\|\bq-\pi\bq\|_{\T_h},
\qquad
\eta_u:=\|u-\Pi u\|_{\T_h}.
\]
Assume that the exact solution is sufficiently smooth and satisfies the wave-solution scaling assumptions used in the projection estimates. More precisely, assume that the following four bounds hold:
\begin{align}
\eta_q^2
&\le
C_\pi (h\kappa)^{2p+2}
(1+\tau_{\max}\kappa^{-1})^2
\|\bq\|_{\T_h}^2,
\label{eq:app-proj-q-by-q}\\
\eta_u^2
&\le
C_\pi (h\kappa)^{2p+2}
(\kappa^{-1}+\kappa\tau_{\min}^{-1})^2
\|\bq\|_{\T_h}^2,
\label{eq:app-proj-u-by-q}\\
\eta_q^2
&\le
C_\pi (h\kappa)^{2p+2}
\kappa^2(1+\tau_{\max}\kappa^{-1})^2
\|u\|_{\T_h}^2,
\label{eq:app-proj-q-by-u}\\
\eta_u^2
&\le
C_\pi (h\kappa)^{2p+2}
(1+\kappa^2\tau_{\min}^{-1})^2
\|u\|_{\T_h}^2.
\label{eq:app-proj-u-by-u}
\end{align}
These estimates are the projection approximation input used to pass from a projection-defect estimate to the explicit Cui--Zhang-type bounds.

\begin{proposition}\label{prop:CZ-type-error}
Under the assumptions of \cref{thm:main-stability} and the projection approximation estimates \eqref{eq:app-proj-q-by-q}--\eqref{eq:app-proj-u-by-u}, the HDG solution with $Q=0$ satisfies
\begin{align}
\|\bq-\bq_h\|_{\T_h}^2
&\le
C_1(h\kappa)^{2p+2}(1+\tau_{\max}\kappa^{-1})^2\|\bq\|_{\T_h}^2
\notag\\
&\quad+
C_2 C_{\rm st}(h\kappa)^{2p+2}
\left[(1+\tau_{\max}\kappa^{-1})^2+
\kappa^4(\kappa^{-1}+\kappa\tau_{\min}^{-1})^2\right]
\|\bq\|_{\T_h}^2,
\label{eq:app-CZ-type-q}
\end{align}
and
\begin{align}
\|u-u_h\|_{\T_h}^2
&\le
C_1(h\kappa)^{2p+2}(1+\kappa^2\tau_{\min}^{-1})^2\|u\|_{\T_h}^2
\notag\\
&\quad+
C_2 C_{\rm st}(h\kappa)^{2p+2}
\left[(1+\tau_{\max}\kappa^{-1})^2+
\kappa^4(\kappa^{-1}+\kappa\tau_{\min}^{-1})^2\right]
\|u\|_{\T_h}^2.
\label{eq:app-CZ-type-u}
\end{align}
The constants may depend on the fixed polynomial degree, the shape-regularity class, the prescribed compact wave-number interval, and the constants in \eqref{eq:app-proj-q-by-q}--\eqref{eq:app-proj-u-by-u}; the stability factor contains no negative powers of $h$.
\end{proposition}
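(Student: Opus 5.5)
The argument is purely algebraic. It starts from \cref{cor:projection-defect-error} and substitutes the assumed projection bounds \eqref{eq:app-proj-q-by-q}--\eqref{eq:app-proj-u-by-u}. Corollary~\ref{cor:projection-defect-error} gives
\[
\|\bq-\bq_h\|_{\T_h}^2+\kappa^2\|u-u_h\|_{\T_h}^2
\le C\bigl[\eta_q^2+\kappa^2\eta_u^2+C_{\rm st}(\eta_q^2+\kappa^4\eta_u^2)\bigr],
\]
with $C$ and $C_{\rm st}$ independent of $h$. Since $\kappa\in[\kappa_0,\kappa_1]$, the factor $\kappa^2$ in front of $\eta_u^2$ can be absorbed into $\kappa^4\eta_u^2$ up to a constant depending on $\kappa_0$. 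Alternatively, it can be kept and bounded separately; either way it is harmless for the locally uniform statement.

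For \eqref{eq:app-CZ-type-q}, I drop the $u$-error term on the left. In the non-stability part $C\eta_q^2$, I use \eqref{eq:app-proj-q-by-q} directly, which produces the $C_1$ term. In the stability part, \eqref{eq:app-proj-q-by-q} bounds $\eta_q^2$ and \eqref{eq:app-proj-u-by-q} bounds $\kappa^4\eta_u^2$. Together these give exactly the bracket $(1+\tau_{\max}\kappa^{-1})^2+\kappa^4(\kappa^{-1}+\kappa\tau_{\min}^{-1})^2$ times $(h\kappa)^{2p+2}\|\bq\|_{\T_h}^2$. The leftover term $C\kappa^2\eta_u^2$ is also bounded via \eqref{eq:app-proj-u-by-q}, since $\kappa^2\le\kappa_0^{-2}\kappa^4$. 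It is therefore absorbed into the $C_2C_{\rm st}$ bracket, after noting that $C_{\rm st}$ may be taken $\ge1$, or otherwise by enlarging $C_2$.

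For \eqref{eq:app-CZ-type-u}, I divide the corollary's estimate by $\kappa^2\ge\kappa_0^2$. This gives $\|u-u_h\|^2\le C\kappa^{-2}[\eta_q^2+\kappa^2\eta_u^2+C_{\rm st}(\eta_q^2+\kappa^4\eta_u^2)]$. Alternatively, I use the triangle inequality $\|u-u_h\|\le\eta_u+\|\Pi u-u_h\|$ together with \cref{thm:projection-error}, so that the non-stability part is just $\eta_u^2$ and \eqref{eq:app-proj-u-by-u} yields the $C_1$ term. For the stability part, $\kappa^{-2}\eta_q^2$ is bounded by \eqref{eq:app-proj-q-by-u}, whose extra $\kappa^2$ cancels and leaves $(1+\tau_{\max}\kappa^{-1})^2\|u\|^2$. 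The term $\kappa^2\eta_u^2$ is bounded by \eqref{eq:app-proj-u-by-u}.

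The main obstacle is matching the bracket $\kappa^4(\kappa^{-1}+\kappa\tau_{\min}^{-1})^2$ in the $u$-estimate. I would handle it with the elementary comparison
\[
\kappa^2(1+\kappa^2\tau_{\min}^{-1})^2=\kappa^4(\kappa^{-1}+\kappa\tau_{\min}^{-1})^2,
\]
which holds exactly. So no additional constant is needed there. All $\kappa$-dependent leftover factors lie in a compact range and go into $C_1$ and $C_2$.

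Finally, I note that only $C_{\rm st}$ from \cref{thm:main-stability} enters as the stability factor. Consequently no negative power of $h$ appears.
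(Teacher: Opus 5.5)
Your proposal is correct and follows the paper's argument: the triangle inequality plus \cref{thm:projection-error}, substitution of the four projection bounds, and the exact identity $\kappa^2(1+\kappa^2\tau_{\min}^{-1})^2=\kappa^4(\kappa^{-1}+\kappa\tau_{\min}^{-1})^2$. Your second route for the $u$-bound is precisely the paper's. The only deviation is the flux bound: going through the combined \cref{cor:projection-defect-error} produces an extra $\kappa^2\eta_u^2$ term, which you absorb using $\kappa\ge\kappa_0$ and $C_{\rm st}\ge1$ (enlarged if necessary), whereas the paper applies the triangle inequality to $\|\bq-\bq_h\|_{\T_h}$ directly and never generates that term.
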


\begin{proof}
We first estimate the flux error. By the triangle inequality,
\[
\|\bq-\bq_h\|_{\T_h}^2
\le
2\|\bq-\pi\bq\|_{\T_h}^2
+
2\|\pi\bq-\bq_h\|_{\T_h}^2
=
2\eta_q^2+
2\|\pi\bq-\bq_h\|_{\T_h}^2.
\]
The projection error estimate \eqref{eq:projection-error-estimate} gives
\[
\|\pi\bq-\bq_h\|_{\T_h}^2
\le
C_{\rm st}
\left(
\eta_q^2+\kappa^4\eta_u^2
\right).
\]
Therefore,
\[
\|\bq-\bq_h\|_{\T_h}^2
\le
2\eta_q^2+
2C_{\rm st}
\left(
\eta_q^2+\kappa^4\eta_u^2
\right).
\]
Substituting \eqref{eq:app-proj-q-by-q} and \eqref{eq:app-proj-u-by-q} yields \eqref{eq:app-CZ-type-q} after renaming constants.

For the scalar error, the triangle inequality gives
\[
\|u-u_h\|_{\T_h}^2
\le
2\|u-\Pi u\|_{\T_h}^2+
2\|\Pi u-u_h\|_{\T_h}^2
=
2\eta_u^2+2\|\Pi u-u_h\|_{\T_h}^2.
\]
Again by \eqref{eq:projection-error-estimate},
\[
\kappa^2\|\Pi u-u_h\|_{\T_h}^2
\le
C_{\rm st}
\left(
\eta_q^2+\kappa^4\eta_u^2
\right),
\]
and hence
\[
\|\Pi u-u_h\|_{\T_h}^2
\le
C_{\rm st}
\left(
\kappa^{-2}\eta_q^2+\kappa^2\eta_u^2
\right).
\]
Thus
\[
\|u-u_h\|_{\T_h}^2
\le
2\eta_u^2+
2C_{\rm st}
\left(
\kappa^{-2}\eta_q^2+\kappa^2\eta_u^2
\right).
\]
Substituting \eqref{eq:app-proj-q-by-u} and \eqref{eq:app-proj-u-by-u} gives
\[
\kappa^{-2}\eta_q^2
\le
C_\pi (h\kappa)^{2p+2}
(1+\tau_{\max}\kappa^{-1})^2
\|u\|_{\T_h}^2,
\]
and
\[
\kappa^2\eta_u^2
\le
C_\pi (h\kappa)^{2p+2}
\kappa^2(1+\kappa^2\tau_{\min}^{-1})^2
\|u\|_{\T_h}^2.
\]
Since
\[
\kappa^2(1+\kappa^2\tau_{\min}^{-1})^2
=
\kappa^4(\kappa^{-1}+\kappa\tau_{\min}^{-1})^2,
\]
we obtain \eqref{eq:app-CZ-type-u} after renaming constants.
\end{proof}

\section{Additional numerical results}\label{app:numerics}

This appendix records supplementary numerical data for the experiments in
\Cref{sec:numerics}. The results here are not needed for the proof, but they document
the reproducibility and the behavior of the method in the tests described in the
main text.

\subsection{Detailed wave-number sweep}\label{app:num-kappa}

\Cref{tab:app-kappa-fixed-ppw} gives the fixed-points-per-wavelength part of the
wave-number sweep. The mesh size is chosen so that the number of points per
wavelength is approximately $12.57$ for $\kappa\ge4$ and is at least $N=8$ for
low wave numbers. The table shows the expected growth of the Helmholtz error with
$\kappa$ and the favorable high-frequency behavior of the choice $\tau=\kappa$ in
the scalar variable. This experiment is included to clarify the interpretation of the
locally uniform-in-$\kappa$ theorem: the method is not being claimed to be
high-frequency uniform.

\begin{table}[tbp]
\centering
\caption{Wave-number sweep with fixed points per wavelength.}
\label{tab:app-kappa-fixed-ppw}
\begin{tabular}{cccccc}
\toprule
$\kappa$ & $\tau$ & $N$ & points per wavelength &
$\|u-u_h\|_{\T_h}$ & $\|\bq-\bq_h\|_{\T_h}$ \\
\midrule
1  & $1$          & 8  & 50.27 & $3.46\times10^{-4}$ & $4.78\times10^{-4}$ \\
1  & $\kappa$    & 8  & 50.27 & $3.46\times10^{-4}$ & $4.78\times10^{-4}$ \\
1  & $0.1\kappa$ & 8  & 50.27 & $2.10\times10^{-3}$ & $3.57\times10^{-4}$ \\
1  & $10\kappa$  & 8  & 50.27 & $2.82\times10^{-4}$ & $3.21\times10^{-3}$ \\
2  & $1$          & 8  & 25.13 & $2.01\times10^{-3}$ & $3.10\times10^{-3}$ \\
2  & $\kappa$    & 8  & 25.13 & $1.39\times10^{-3}$ & $3.83\times10^{-3}$ \\
2  & $0.1\kappa$ & 8  & 25.13 & $8.30\times10^{-3}$ & $2.91\times10^{-3}$ \\
2  & $10\kappa$  & 8  & 25.13 & $1.21\times10^{-3}$ & $2.47\times10^{-2}$ \\
4  & $1$          & 8  & 12.57 & $1.41\times10^{-2}$ & $2.40\times10^{-2}$ \\
4  & $\kappa$    & 8  & 12.57 & $5.62\times10^{-3}$ & $3.07\times10^{-2}$ \\
4  & $0.1\kappa$ & 8  & 12.57 & $3.19\times10^{-2}$ & $2.71\times10^{-2}$ \\
4  & $10\kappa$  & 8  & 12.57 & $6.70\times10^{-3}$ & $1.76\times10^{-1}$ \\
8  & $1$          & 16 & 12.57 & $2.66\times10^{-2}$ & $6.29\times10^{-2}$ \\
8  & $\kappa$    & 16 & 12.57 & $5.81\times10^{-3}$ & $6.24\times10^{-2}$ \\
8  & $0.1\kappa$ & 16 & 12.57 & $3.23\times10^{-2}$ & $6.95\times10^{-2}$ \\
8  & $10\kappa$  & 16 & 12.57 & $1.03\times10^{-2}$ & $3.57\times10^{-1}$ \\
16 & $1$          & 32 & 12.57 & $4.95\times10^{-2}$ & $3.13\times10^{-1}$ \\
16 & $\kappa$    & 32 & 12.57 & $6.57\times10^{-3}$ & $1.34\times10^{-1}$ \\
16 & $0.1\kappa$ & 32 & 12.57 & $3.40\times10^{-2}$ & $2.26\times10^{-1}$ \\
16 & $10\kappa$  & 32 & 12.57 & $1.92\times10^{-2}$ & $7.56\times10^{-1}$ \\
32 & $1$          & 64 & 12.57 & $8.71\times10^{-2}$ & $1.78$ \\
32 & $\kappa$    & 64 & 12.57 & $9.02\times10^{-3}$ & $3.33\times10^{-1}$ \\
32 & $0.1\kappa$ & 64 & 12.57 & $4.04\times10^{-2}$ & $8.44\times10^{-1}$ \\
32 & $10\kappa$  & 64 & 12.57 & $3.77\times10^{-2}$ & $1.82$ \\
\bottomrule
\end{tabular}
\end{table}

The random-right-hand-side stability ratios in the same wave-number sweep remain
of order one. On the fixed mesh $N=128$, the largest $R_{\max}$ over all tested
$\kappa$ and $\tau$ is $0.640$. Under the fixed-points-per-wavelength rule, the
largest value is $0.786$, occurring at $\kappa=32$ and $\tau=10\kappa$.

\subsection{Rough \texorpdfstring{$L^2$}{L2} forcing}\label{app:num-rough}

The rough-data experiment uses
\[
f=\chi_{D_1}+\ii\chi_{D_2},
\qquad
g=0,
\]
with
\[
D_1=[1/4,3/4]\times[1/4,3/4],
\qquad
D_2=[1/8,5/8]\times[3/8,7/8].
\]
A reference solution is computed with $N_{\rm ref}=256$ and $p_{\rm ref}=2$.
\Cref{tab:app-rough} reports the reference errors. The errors decrease monotonically,
and the energy norm \eqref{eq:num-energy-ratio} remains bounded throughout the test.

\begin{table}[tbp]
\centering
\caption{Rough $L^2$ forcing: errors with respect to the reference solution
computed with $N_{\rm ref}=256$ and $p_{\rm ref}=2$.}
\label{tab:app-rough}
\begin{tabular}{ccccccc}
\toprule
$p$ & $N$ &
$\|u_h-u_{\rm ref}\|_{\T_h}$ & rate &
$\|\bq_h-\bq_{\rm ref}\|_{\T_h}$ & rate &
$E_h$ \\
\midrule
0 & 8   & $2.49\times10^{-2}$ & --    & $9.99\times10^{-2}$ & --    & 0.251 \\
0 & 16  & $1.51\times10^{-2}$ & 0.718 & $6.01\times10^{-2}$ & 0.733 & 0.285 \\
0 & 32  & $8.32\times10^{-3}$ & 0.865 & $3.28\times10^{-2}$ & 0.874 & 0.306 \\
0 & 64  & $4.34\times10^{-3}$ & 0.937 & $1.71\times10^{-2}$ & 0.942 & 0.318 \\
0 & 128 & $2.22\times10^{-3}$ & 0.970 & $8.70\times10^{-3}$ & 0.973 & 0.325 \\
1 & 8   & $1.55\times10^{-3}$ & --    & $4.36\times10^{-3}$ & --    & 0.329 \\
1 & 16  & $3.85\times10^{-4}$ & 2.010 & $1.04\times10^{-3}$ & 2.062 & 0.331 \\
1 & 32  & $9.58\times10^{-5}$ & 2.006 & $2.65\times10^{-4}$ & 1.977 & 0.331 \\
1 & 64  & $2.39\times10^{-5}$ & 2.002 & $6.86\times10^{-5}$ & 1.952 & 0.331 \\
1 & 128 & $5.97\times10^{-6}$ & 2.001 & $1.78\times10^{-5}$ & 1.949 & 0.331 \\
\bottomrule
\end{tabular}
\end{table}

\subsection{Three-dimensional verification}\label{app:num-3d}

The three-dimensional test uses
\[u(x,y,z)=
\exp\!\left(\ii\kappa\frac{x+y+z}{\sqrt3}\right),
\qquad
\kappa=3,
\]
on $\Omega=(0,1)^3$. The mesh is the uniform tetrahedral mesh of the cube, and
we test $p=0,1$ with $\tau=1$ and $\tau=\kappa$. The results in
\Cref{tab:app-3d} verify the expected rates in three dimensions. The largest
algebraic residual in this experiment is $2.43\times10^{-13}$, the largest weak
internal flux residual is $7.08\times10^{-12}$, and the largest weak Robin residual is
$1.84\times10^{-12}$.

\begin{table}[tbp]
\centering
\caption{Three-dimensional plane-wave verification, $\kappa=3$.}
\label{tab:app-3d}
\small
\begin{tabular}{ccccccc}
\toprule
$p$ & $\tau$ & $N$ &
$\|u-u_h\|_{\T_h}$ & rate &
$\|\bq-\bq_h\|_{\T_h}$ & rate \\
\midrule
0 & $1$       & 4  & $2.97\times10^{-1}$ & --    & $8.72\times10^{-1}$ & -- \\
0 & $1$       & 8  & $1.57\times10^{-1}$ & 0.917 & $4.62\times10^{-1}$ & 0.917 \\
0 & $1$       & 16 & $8.04\times10^{-2}$ & 0.965 & $2.36\times10^{-1}$ & 0.966 \\
0 & $\kappa$ & 4  & $2.94\times10^{-1}$ & --    & $1.15$ & -- \\
0 & $\kappa$ & 8  & $1.60\times10^{-1}$ & 0.874 & $6.40\times10^{-1}$ & 0.840 \\
0 & $\kappa$ & 16 & $8.33\times10^{-2}$ & 0.945 & $3.37\times10^{-1}$ & 0.926 \\
1 & $1$       & 4  & $3.35\times10^{-2}$ & --    & $9.80\times10^{-2}$ & -- \\
1 & $1$       & 8  & $8.32\times10^{-3}$ & 2.008 & $2.42\times10^{-2}$ & 2.021 \\
1 & $1$       & 16 & $2.07\times10^{-3}$ & 2.004 & $5.99\times10^{-3}$ & 2.011 \\
1 & $\kappa$ & 4  & $2.84\times10^{-2}$ & --    & $1.48\times10^{-1}$ & -- \\
1 & $\kappa$ & 8  & $6.89\times10^{-3}$ & 2.044 & $3.59\times10^{-2}$ & 2.040 \\
1 & $\kappa$ & 16 & $1.70\times10^{-3}$ & 2.016 & $8.79\times10^{-3}$ & 2.031 \\
\bottomrule
\end{tabular}
\end{table}

\subsection{Acceptance diagnostics}\label{app:num-diagnostics}

For completeness, \Cref{tab:app-diagnostics} records the largest residual
diagnostics observed in the full set of numerical experiments. All values are weak
residuals in the finite element test spaces, except for the algebraic residual of the
condensed linear system.

\begin{table}[h!]
\centering
\caption{Maximum residual diagnostics over the numerical experiments.}
\label{tab:app-diagnostics}
\begin{tabular}{lccc}
\toprule
experiment &
algebraic residual &
$C_{\rm int}$ &
$C_{\rm bdry}$ \\
\midrule
plane-wave convergence       & $1.58\times10^{-11}$ & $1.00\times10^{-10}$ & $1.67\times10^{-11}$ \\
mesh-uniform stability       & $9.54\times10^{-12}$ & $4.51\times10^{-13}$ & $7.25\times10^{-14}$ \\
$Q$-forcing stability        & $1.75\times10^{-11}$ & $5.55\times10^{-13}$ & $8.12\times10^{-14}$ \\
wave-number sweep            & $4.41\times10^{-12}$ & $1.77\times10^{-11}$ & $3.13\times10^{-12}$ \\
rough $L^2$ forcing          & $9.09\times10^{-12}$ & $5.58\times10^{-13}$ & $4.43\times10^{-14}$ \\
three-dimensional check      & $2.43\times10^{-13}$ & $7.08\times10^{-12}$ & $1.84\times10^{-12}$ \\
\bottomrule
\end{tabular}
\end{table}

The small residuals reported in \Cref{tab:app-diagnostics} confirm that the
convergence and stability results are not contaminated by poorly solved linear
systems or by violations of the weak HDG conservation and boundary equations.

\bibliographystyle{abbrv}
\bibliography{reference}

\end{document}